\newtheorem{thm}{Theorem}[section]
\newtheorem{fact}[thm]{Fact}
\newtheorem{lma}[thm]{Lemma}
\newtheorem{prop}[thm]{Proposition}
\newtheorem{cor}[thm]{Corollary}
\newtheorem*{ThmA}{Theorem A}
\newtheorem*{ThmB}{Theorem B}
\newtheorem*{ThmC}{Theorem C}
\newtheorem*{ThmD}{Theorem D}
\theoremstyle{definition}
\newtheorem{defn}[thm]{Definition}
\newtheorem{rmk}[thm]{Remark}
\newtheorem{ex}[thm]{Example}
\newcommand{\W}{W_{\mathrm{Dhar}}}
\newcommand{\un}{\underline}
\newcommand{\md}{\underline{d}}
\newcommand{\cX}{\mathcal{X}}
\newcommand{\Spec}{\operatorname{Spec}}
\newcommand{\val}{\operatorname{val}}
\newcommand{\cO}{\mathcal{O}}
\newcommand{\Pic}{{\operatorname{{Pic}}}}
\newcommand{\Prin}{{\operatorname{{Prin}}}}
\newcommand{\Div}{{\operatorname{{Div}}}}
\newcommand{\Br}{\mathrm{Br}}
\newcommand{\PP}{\mathbb{P}}
\newcommand{\ZZ}{\mathbb{Z}}
\newcommand{\ud}{\underline{d}}
\newcommand{\rmax}{r^{\max}}
\newcommand{\rmaxx}{r^{\MAX}}
\newcommand{\ralg}{r^{\alg}}
\newcommand{\ralgg}{r^{\ALG}}
\DeclareMathOperator{\MAX}{MAX}
\DeclareMathOperator{\ALG}{ALG}
\DeclareMathOperator{\alg}{alg}
\title{Clifford representatives via the uniform algebraic rank} 
\author[M. Barbosa, K. Christ, M. Melo]{Myrla Barbosa, Karl Christ and Margarida Melo}
\thanks{MB was supported by CNPq, grant 170255/2023-9. KC was partially supported by NSF FRG grant DMS–2053261 and is member of the INdAM group GNSAGA.  MM is supported by MIUR via the projects  PRIN2017SSNZAW (Advances in Moduli Theory and Birational Classification),  PRIN 2022L34E7W (Moduli spaces and birational geometry) and PRIN 2020KKWT53 (Curves, Ricci flat Varieties
and their Interactions), and is a member of the Centre for Mathematics of the University of
Coimbra -- UIDB/00324/2020, funded by the Portuguese Government through FCT/MCTES and of the INdAM group GNSAGA \\
2020 Mathematics Subject Classification: Primary – 14H51; Secondary – 14T20}
\address[Barbosa]{Departamento de Matematica MTM, Universidade Federal de Santa Catarina, 88.040-900, Florianopolis-SC, Brazil}\email{myrlakedynna@gmail.com}
\address[Christ]{Dipartimento di Matematica\\
	Università di Torino\\Via Carlo Alberto 10 \\10123 Turin\\  Italy }\email{karl.christ@unito.it}
\address[Melo]{Department of Mathematics and Physics, Universit\`a Roma Tre, Largo San Leonardo Murialdo, 00146 Roma, Italy}\email{margarida.melo@uniroma3.it}
\begin{document}

\begin{abstract}
    In this paper, we introduce the uniform algebraic rank of a divisor class on a finite graph. We show that it lies between Caporaso's algebraic rank and the combinatorial rank of Baker and Norine. We prove the Riemann-Roch theorem for the uniform algebraic rank, and show that both the algebraic and the uniform algebraic rank are realized on effective divisors. As an application, we use the uniform algebraic rank to show that Clifford representatives always exist. We conclude with an explicit description of such Clifford representatives for a large class of graphs.
\end{abstract}

\maketitle

\section{Introduction}
Describing the limits of sections of line bundles $\mathcal L_\eta$ on a smooth curve $\mathcal X_\eta$ when the curve degenerates to a nodal curve $X$ is a notoriously difficult problem in algebraic geometry. Among the many approaches to address this question, divisor theory on graphs has  allowed to give combinatorial proofs of important algebro-geometric results such as the Brill-Noether theorem, the Petri theorem, the maximal rank conjecture and the birational geometry of the moduli space of curves; see \cite{JPsurvey} for a survey of these results.

Divisor theory on graphs has been developed in the last 20 years following the breakthrough work of Baker and Norine \cite{BN}, who introduced the notion of rank $r_G(\delta)$ of a divisor class $\delta$ on a graph $G$. Then they showed that, quite remarkably, this rank satisfies several classical theorems from algebraic geometry, such as the Riemann-Roch theorem and the Clifford inequality.

The divisors $\ud$ in a divisor class $\delta$ on a graph $G$ are formal linear combinations of vertices of $G$ and can be interpreted as combinatorial types of line bundles on nodal curves with dual graph equal to $G$ (their multidegrees). Any line bundle $\mathcal L_\eta$ on the general fiber $\mathcal X_\eta$ of a regular smoothing of a nodal curve $X$ extends to a line bundle $L$ on $X$ with multidegree in some divisor class $\delta$ on the dual graph $G$ of $X$. What makes the Baker-Norine rank useful in algebraic geometry is Baker's specialization lemma \cite{B}: \begin{equation} \label{eq:specialization} r(\mathcal X_\eta, \mathcal L_\eta) \leq r_G(\delta).\end{equation} This inequality can be strict, and much effort has been devoted to describing the gap; see, for example, \cite{C1}, \cite{AB}, \cite{FJP}, and \cite{AG22} for refinements of the rank and Figure~\ref{fig1} in Section~\ref{sec:other ranks} below for a discussion of their relation.

In this paper, our main point of reference is Caporaso's algebraic rank $\ralg(G, \delta)$ introduced in \cite{C1}. It is defined via a min-max construction, as follows:
\[\ralg(G,\delta):=\max_{X\in M_G } \left\{ \min_{\ud \in \delta} \left \{ \max_{L\in \Pic^{\ud}(X)} \left \{r(X,L) \right\} \right\}\right \},\]
where $M_G$ denotes the set of isomorphism classes of curves with dual graph $G$, and $\Pic^{\ud}(X)$ the set of line bundles on $X$ of combinatorial type $\ud$. 

Among the attempts to describe the discrepancy in \eqref{eq:specialization}, the algebraic rank is unique in so far as it is defined purely in terms of line bundles on the limit curve $X$. Nonetheless, in \cite{CLM}, Caporaso, Len and the third author were able to show that the algebraic rank is bounded from above by the Baker-Norine rank. On the other hand, it follows from upper-semicontinuity of the algebro-geometric rank that it is bounded from below by $r(\mathcal X_\eta, \mathcal L_\eta)$. Thus the algebraic rank $\ralg(G, \delta)$ refines Baker's inequality \eqref{eq:specialization}. Further properties of the algebraic rank have been established in \cite{KY15, KY16, Len17} and we give a summary in Fact~\ref{fact:eqrgralg}. 

In the current paper, we propose to modify the definition of the algebraic rank and to study the \emph{uniform algebraic rank}, defined by

\begin{equation*}\ralgg(G,\delta):=\min_{\ud \in \delta} \left \{\max_{X\in M_G} \left \{ \max_{ L \in \Pic^{\ud}(X)} \left \{ r(X,L) \right \} \right \} \right \}. \end{equation*}

Our first main result is that this notion of rank refines Baker's specialization \eqref{eq:specialization} further.
\begin{ThmA}
    Let $X$ be a nodal curve with dual graph $G$, $\mathcal X_\eta$ a regular one-parameter smoothing of $X$ and $L_\eta$ a line bundle on $\mathcal X_\eta$ that specializes to a divisor class $\delta$ on $G$. Then:    \begin{equation}\label{eq:inequalities} r(\mathcal X_\eta, \mathcal{L}_\eta) \leq \ralg(G, \delta) \leq \ralgg(G, \delta) \leq r_G(\delta).\end{equation}
\end{ThmA}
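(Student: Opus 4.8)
The plan is to prove the three inequalities of \eqref{eq:inequalities} separately, the first being essentially known and the last carrying the real content. Write $f(X,\ud):=\max_{L\in\Pic^{\ud}(X)}r(X,L)$, so that $\ralg(G,\delta)=\max_{X\in M_G}\min_{\ud\in\delta}f(X,\ud)$ and $\ralgg(G,\delta)=\min_{\ud\in\delta}\max_{X\in M_G}f(X,\ud)$ (all these maxima and minima are attained, since the values are bounded integers). For the leftmost inequality, by Fact~\ref{fact:eqrgralg} and because $X\in M_G$, it is enough to check that $\min_{\ud\in\delta}f(X,\ud)\ge r(\mathcal X_\eta,\mathcal L_\eta)$. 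Fixing $\ud\in\delta$, one twists a model of $\mathcal L_\eta$ by components of $X$ inside the regular smoothing to obtain a line bundle $\mathcal L_{\ud}$ on $\mathcal X$ with $\mathcal L_{\ud}|_{\mathcal X_\eta}\cong\mathcal L_\eta$ and $\mathcal L_{\ud}|_X\in\Pic^{\ud}(X)$ (twisting by components leaves the generic fibre unchanged and moves the multidegree of the central fibre within its linear equivalence class on $G$). Spreading out an arbitrary effective divisor of degree $k:=r(\mathcal X_\eta,\mathcal L_\eta)$ supported in $X_{\mathrm{sm}}$ to a relative effective divisor over the base and applying upper semicontinuity of $h^0$ to the corresponding twists of $\mathcal L_{\ud}$ then gives $r(X,\mathcal L_{\ud}|_X)\ge k$, hence $f(X,\ud)\ge r(\mathcal X_\eta,\mathcal L_\eta)$. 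The middle inequality is the weak minimax inequality ``$\max\min\le\min\max$'' for $f$: for every $X_0\in M_G$ and $\ud_0\in\delta$ one has $\min_{\ud}f(X_0,\ud)\le f(X_0,\ud_0)\le\max_{X}f(X,\ud_0)$, and taking the maximum over $X_0$ and then the minimum over $\ud_0$ yields $\ralg(G,\delta)\le\ralgg(G,\delta)$.

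The rightmost inequality I would deduce from the uniform pointwise bound $r(X,L)\le r_G(\mdeg(L))$, valid for \emph{every} nodal curve $X$ with dual graph $G$ and \emph{every} $L\in\Pic(X)$; granting it, $\ralgg(G,\delta)=\min_{\ud\in\delta}\max_{X}\max_{L\in\Pic^{\ud}(X)}r(X,L)\le\min_{\ud\in\delta}r_G(\ud)=r_G(\delta)$, since every $\ud\in\delta$ has class $\delta$. To prove the bound, set $r:=r(X,L)$; there is nothing to prove if $r=-1$. Let $\bar E=\sum_i m_i v_i$ be an arbitrary effective divisor of degree $r$ on $G$, and lift it to the effective divisor $E=\sum_i E_i$ on $X$, where $E_i$ is effective of degree $m_i$ and supported in the smooth locus of the component $C_i$; then $\mdeg(L(-E))=\mdeg(L)-\bar E$, and $h^0(X,L(-E))>0$ because $r(X,L)\ge r=\deg E$. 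The crucial input is the lemma that $h^0(X,M)\ne0$ implies $r_G(\mdeg(M))\ge0$, i.e.\ that $\mdeg(M)$ is linearly equivalent on $G$ to an effective divisor; applied to $M=L(-E)$ it gives $r_G(\mdeg(L)-\bar E)\ge0$ for every such $\bar E$, which is exactly the statement $r_G(\mdeg(L))\ge r$.

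The idea for this lemma is induction on the number of components of $X$ on which a fixed nonzero section $s\in H^0(X,M)$ vanishes identically. If that number is $0$, then the zero scheme $\{s=0\}$ is an effective Cartier divisor $D$ with $\mathcal O_X(D)\cong M$ whose restriction to each component is an honest effective divisor, so $\mdeg(M)$ is coordinatewise nonnegative and hence already effective on $G$. If $s$ vanishes identically on some component $C_i$, one replaces $M$ by the twist $M\otimes\mathcal O_{\mathcal X}(-C_i)|_X$ coming from a regular smoothing $\mathcal X$ of $X$; this realizes a chip-firing move at $v_i$, so it does not change the class of $\mdeg(M)$ on $G$, while $s$ induces a nonzero section that vanishes identically on one component fewer, and the induction proceeds. (Equivalently, the lemma is the ``$r=0$'' case of Baker's specialization lemma \cite{B}, obtained by placing $X$ in a regular smoothing and twisting the chosen extension of $M$ by components so that a nonzero section of $M$ lifts to the total space.)

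The step I expect to be the main obstacle is this lemma, and within it the careful treatment of sections vanishing identically on whole components: one must make ``twisting by a component'' precise as the line-bundle operation induced from a regular smoothing — on $X$ itself the Weil divisor $C_i$ fails to be Cartier at the nodes — and verify that it realizes exactly the corresponding chip-firing move on $G$, and one must ensure that, after such twists, a section can be chosen that is nonzero on every component of $X$. Everything else in the argument is formal.
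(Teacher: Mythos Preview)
Your treatment of the first two inequalities is fine and matches the paper: the leftmost is upper semicontinuity as in \cite[Lemma~2.7]{CLM}, and the middle one is exactly the weak minimax inequality (this is the content of Proposition~\ref{prop:ineqralgalgg}).

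The gap is in the rightmost inequality. Your strategy rests on the pointwise bound $r(X,L)\le r_G(\mdeg(L))$ for every $X\in M_G$ and every $L\in\Pic(X)$, and this bound is \emph{false} when $G$ has weights or loops. Take $X=\PP^1\cup_p E$ with $E$ a smooth curve of genus~$1$, so $G$ has vertices $v_1$ (weight $0$) and $v_2$ (weight $1$) joined by one edge. Let $L$ restrict to $\mathcal O_{\PP^1}(1)$ on $\PP^1$ and to $\mathcal O_E$ on $E$; then $h^0(X,L)=2$, so $r(X,L)=1$. On the other hand, $r_G((1,0))=r_{G^\bullet}((1,0,0))$, where $G^\bullet$ has an extra vertex $w$ joined to $v_2$ by two edges, and $(1,0,0)-w=(1,0,-1)$ is a nontrivial degree-$0$ class on $G^\bullet$ (the principal lattice is generated by $(-1,1,0)$ and $(0,2,-2)$, and $(1,0,-1)$ is not an integer combination). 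Hence $r_G((1,0))=0<1=r(X,L)$.

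The error in your deduction is the last step: you verify that $\mdeg(L)-\bar E$ is equivalent to an effective divisor for every effective $\bar E$ of degree $r$ \emph{on $G$}, and conclude $r_G(\mdeg(L))\ge r$. That implication holds only for weightless, loopless $G$; in general $r_G$ is computed on $G^\bullet$, and one must test against effective divisors supported on the new vertices as well. Your ``$h^0\neq 0\Rightarrow\mdeg$ effective--equivalent'' lemma is correct, but it does not see those extra vertices. The paper circumvents this via Lemma~\ref{rdgeqr} (from \cite{CLM}): to get $r_G(\ud)\ge s$ it suffices that $\ud-\underline{e}^{\deg}$ be effective--equivalent for all effective $\underline{e}$ of degree $s$ on $G$, where $\underline{e}^{\deg}(v)=\underline{e}(v)+\min\{\underline{e}(v),g(v)\}$ encodes the weights and loops. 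The proof of Theorem~\ref{thm:mainineq} then uses, for each such $\underline{e}$, the $u$-reduced representative $\ud$ of $\delta$ with $\ud-\underline{e}^{\deg}$ $u$-reduced, together with the hypothesis $\rmaxx(G,\ud)\ge s$ to produce a single pair $(X,L)$ with $r(X,L)\ge s$; the argument of \cite[Theorem~4.2]{CLM} then forces $(\ud-\underline{e}^{\deg})(u)\ge 0$. In the example above, $\underline{e}=(0,1)$ gives $\underline{e}^{\deg}=(0,2)$, and indeed $(1,0)-(0,2)=(1,-2)$ is not equivalent to an effective divisor on $G$, so the criterion correctly fails for $s=1$.
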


See  Proposition~\ref{prop:ineqralgalgg} and Theorem~\ref{thm:mainineq}. By \cite{Len17}, the first inequality can be strict, and it is not difficult to see that also the third inequality can be strict, see Remark~\ref{rmk:strict ineq}. Constructing an example where the second inequality is strict is more difficult, and we do so in Example~\ref{ex:strict ineq}.

In a forthcoming paper of the first and third authors, we will describe a further refinement of the uniform algebraic rank, which takes into account certain line bundles on quasistable modifications of curves in $M_G$ (i.e., nodal curves obtained by inserting exceptional rational curves on the preimages of partial normalizations of the original curve). This modified rank is inspired by the geometry of compactified Jacobians and appeared first in the PhD thesis of the first author \cite{B21}, where it is shown to be equal to the Baker-Norine rank in some cases where the third inequality in \eqref{eq:inequalities} is strict.

Next, we establish the Riemann-Roch theorem for the uniform algebraic rank in Theorem~\ref{rrminmax}:
\begin{ThmB}[Riemann-Roch for the uniform algebraic rank]
    Let $\ud$ be a divisor of degree $d$ on a graph $G$ of genus $g$. Denote by $\un k_G$ the canonical divisor on $G$. Then
    \[\ralgg(G,[\ud]) - \ralgg(G, [\un{k}_G - \ud] ) = d - g + 1 ,\]
where $[\ud]$ represents the class of the divisor $\ud$.
\end{ThmB}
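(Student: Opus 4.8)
The plan is to deduce the Riemann–Roch formula for $\ralgg$ from the classical Riemann–Roch theorem on the curves $X \in M_G$, exploiting the fact that Serre duality interacts well with the min-max structure. First I would fix a divisor $\ud$ of degree $d$ on $G$ and recall that for any curve $X \in M_G$ and any line bundle $L \in \Pic^{\ud}(X)$ one has $h^0(X,L) - h^0(X, \omega_X \otimes L^{-1}) = d - g + 1$ by Riemann–Roch on $X$, so that $r(X,L) - r(X, \omega_X \otimes L^{-1}) = d - g + 1$ (using $r(X,L) = h^0(X,L) - 1$ in the relevant range, with the usual convention that $r = -1$ when $h^0 = 0$). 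The key compatibility is that $L \mapsto \omega_X \otimes L^{-1}$ is a bijection from $\Pic^{\ud}(X)$ to $\Pic^{\un k_G - \ud}(X)$: the multidegree of $\omega_X$ is $\un k_G$ (this is the definition of the canonical divisor on $G$), so tensoring with $\omega_X$ and dualizing sends combinatorial type $\ud$ to combinatorial type $\un k_G - \ud$, bijectively.

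The main step is then to combine these pointwise identities with the definition $\ralgg(G,[\ud]) = \min_{\ud' \in [\ud]} \max_{X} \max_{L \in \Pic^{\ud'}(X)} r(X,L)$. I would first argue that, because linearly equivalent divisors on $G$ differ by a principal divisor and principal divisors on $G$ lift to principal (hence trivial-multidegree-changing) divisors on $X$ — more precisely, $\Pic^{\ud'}(X)$ and $\Pic^{\ud''}(X)$ are canonically identified when $[\ud'] = [\ud'']$, compatibly with $h^0$ — the inner quantity $\max_X \max_{L \in \Pic^{\ud'}(X)} r(X,L)$ depends only on the class $[\ud']$. Hence it suffices to prove, for a single fixed representative, that $$\max_{X \in M_G} \max_{L \in \Pic^{\ud}(X)} r(X,L) \;-\; \max_{X \in M_G} \max_{M \in \Pic^{\un k_G - \ud}(X)} r(X,M) \;=\; d - g + 1$$ would be \emph{false} in general at the level of a fixed representative without the $\min$; the correct statement pairs up the minimizing representatives. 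So instead I would show: for every $\ud' \in [\ud]$, writing $f(\ud') := \max_X \max_{L \in \Pic^{\ud'}(X)} r(X,L)$ and $f(\un k_G - \ud')$ analogously, the Serre-duality bijection on each $X$ gives $f(\ud') + f(\un k_G - \ud') \geq$ something — here care is needed because the outer $\max_X$ and the duality do not commute with a single choice of $X$.

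The clean route, which I expect to be the actual argument, is to reduce to a \emph{single} curve. Because $\Pic^{\ud}(X)$ is irreducible and $r(X,-)$ is upper semicontinuous, $\max_{L \in \Pic^{\ud}(X)}r(X,L)$ is attained on a Zariski-dense open set (the "generic" value), and one shows that the function $X \mapsto (\text{generic rank in } \Pic^{\ud}(X))$ also achieves its maximum over $M_G$; call a maximizing curve $X_{\ud}$. The subtlety is that $X_{\ud}$ may differ from $X_{\un k_G - \ud}$. I would resolve this by noting that on the curve $X_{\ud}$, Serre duality gives $r_{\mathrm{gen}}(X_{\ud}, \Pic^{\ud}) - r_{\mathrm{gen}}(X_{\ud}, \Pic^{\un k_G - \ud}) = d-g+1$ (the generic value behaves additively under duality since the duality bijection is an isomorphism of varieties matching generic loci), and then $\max_X r_{\mathrm{gen}}(X, \Pic^{\un k_G - \ud}) \geq r_{\mathrm{gen}}(X_{\ud}, \Pic^{\un k_G - \ud}) = r_{\mathrm{gen}}(X_{\ud},\Pic^{\ud}) - (d-g+1) = \max_X r_{\mathrm{gen}}(X,\Pic^{\ud}) - (d-g+1)$, giving $f(\un k_G - \ud) \geq f(\ud) - (d-g+1)$; the reverse inequality follows by symmetry (swapping the roles of $\ud$ and $\un k_G - \ud$ and using $\un k_G - (\un k_G - \ud) = \ud$, together with $\deg(\un k_G - \ud) = 2g-2-d$). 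Taking the $\min$ over representatives on both sides and using that the quantity depends only on the class then yields the desired equality.

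The main obstacle is the interchange of $\min_{\ud' \in [\ud]}$, $\max_{X}$, and the Serre-duality involution: these do not commute term by term, so one cannot simply "apply Riemann–Roch inside." The way around it is the two-sided inequality argument above — prove $\ralgg(G,[\ud]) - \ralgg(G,[\un k_G - \ud]) \leq d-g+1$ by choosing, for the minimizing representative $\ud'$ of $[\ud]$, the curve $X_{\ud'}$ and transporting its dual line bundles to bound $\ralgg(G,[\un k_G-\ud])$ from below; then prove the reverse inequality by the identical argument with the roles of $[\ud]$ and $[\un k_G - \ud]$ exchanged. One should also double-check the boundary conventions ($r = -1$, negative-degree classes, the case $d < 0$ or $d > 2g-2$) so that the formula holds including these extreme ranges, exactly as in the Baker–Norine and Caporaso settings; this is routine given the analogous check for $\ralg$ recorded in Fact~\ref{fact:eqrgralg}.
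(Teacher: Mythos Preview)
Your overall strategy matches the paper's: first establish Riemann--Roch at the level of $\rmaxx(G,\ud)$ by combining the Serre-duality bijection $L \mapsto \omega_X \otimes L^{-1}$ with the identity $\rmax(X,\ud) - \rmax(X,\un k_G - \ud) = d-g+1$ from Fact~\ref{fact:eqrgralg}(1)(a), then deduce the statement for $\ralgg$ by taking the minimum over representatives, using that $\ud' \mapsto \un k_G - \ud'$ is a bijection $\delta \to \delta^*$. Your two-sided-inequality argument (choose the curve $X_{\ud}$ maximizing for $\ud$, transport via duality, then use the symmetry $\ud \leftrightarrow \un k_G - \ud$) is exactly how the paper shows that the same $X$ realizes both $\rmaxx(G,\ud)$ and $\rmaxx(G,\un k_G - \ud)$; in fact your last step, simply taking $\min_{\ud' \in \delta}$ of the identity $\rmaxx(G,\ud') = (d-g+1) + \rmaxx(G,\un k_G - \ud')$, is a slightly cleaner way to finish than the paper's second contradiction argument.

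Two passages, however, are genuinely wrong and should be excised. First, the claim that ``$\Pic^{\ud'}(X)$ and $\Pic^{\ud''}(X)$ are canonically identified \ldots\ compatibly with $h^0$'' when $[\ud'] = [\ud'']$ is false: the identification is tensoring by a twist $\cO_X(Z)$, and this does \emph{not} preserve $h^0$ --- if it did, the minimum over $\ud' \in \delta$ in the definition of $\ralgg$ would be vacuous. You seem to abandon this a few lines later, but as written it is misleading. Second, your assertion that ``$\max_{L \in \Pic^{\ud}(X)} r(X,L)$ is attained on a Zariski-dense open set (the `generic' value)'' has the semicontinuity backwards: $r(X,-)$ is \emph{upper} semicontinuous, so the maximum is attained on a \emph{closed} locus and the generic value is the \emph{minimum}. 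Fortunately your argument never actually uses this ``generic'' interpretation --- what you need is only that the Serre-duality bijection, combined with pointwise Riemann--Roch, sends a line bundle maximizing $r(X,-)$ on $\Pic^{\ud}(X)$ to one maximizing $r(X,-)$ on $\Pic^{\un k_G - \ud}(X)$, which is exactly $\rmax(X,\ud) - \rmax(X,\un k_G - \ud) = d-g+1$. Replace every ``$r_{\mathrm{gen}}$'' by ``$\rmax$'' and delete the semicontinuity sentence, and the proof is correct.
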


\medskip

Our main application of these constructions is the following: 
A divisor $\md \in \delta$ is called a \emph{Clifford representative} if every line bundle $L$ of combinatorial type $\md$ on any curve $X$ with dual graph $G$ satisfies the Clifford inequality, $r(X,L) \leq \frac{d}{2}$. The following result answers \cite[Question 4.7]{CLM} affirmatively: 

\begin{ThmC}
    Let $\delta$ be a divisor class of degree $d$ on a graph $G$. If  $0 \leq d \leq 2g - 2$, then $\delta$ contains a Clifford representative.
\end{ThmC}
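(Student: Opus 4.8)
The plan is to extract a Clifford representative directly from a divisor realizing the minimum in the definition of $\ralgg(G,\delta)$. Unwinding the definitions, a divisor $\md \in \delta$ is a Clifford representative precisely when $\max_{X \in M_G} \max_{L \in \Pic^{\md}(X)} r(X,L) \le \frac{d}{2}$, which is exactly the quantity whose minimum over $\md \in \delta$ defines $\ralgg(G,\delta)$. Since $r(X,L) = h^0(X,L) - 1 \ge -1$ for every curve $X$ with dual graph $G$ and every line bundle $L$, this quantity is integer-valued and bounded below as a function of $\md$, so the minimum is attained by some $\md^{\ast} \in \delta$. Therefore it suffices to prove the inequality $\ralgg(G,\delta) \le \frac{d}{2}$; once this is known, $\md^{\ast}$ is the desired Clifford representative.

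For the inequality I would combine two ingredients. The first is the third inequality in Theorem~A, namely $\ralgg(G,\delta) \le r_G(\delta)$, which is Theorem~\ref{thm:mainineq}. The second is the Clifford inequality of Baker and Norine for the combinatorial rank, which gives $r_G(\delta) \le \frac{d}{2}$ when $0 \le d \le 2g-2$. Chaining these, $\ralgg(G,\delta) \le r_G(\delta) \le \frac{d}{2}$, and the proof is complete.

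The only point needing a small argument is that Baker and Norine state their Clifford inequality for \emph{special} classes, i.e.\ those with $r_G(\delta) \ge 0$ and $r_G(\un{k}_G - \delta) \ge 0$, so one should check that $r_G(\delta) \le \frac{d}{2}$ holds for an arbitrary class of degree $0 \le d \le 2g-2$. This is immediate if $r_G(\delta) = -1$ since $d \ge 0$; and if $r_G(\un{k}_G-\delta) = -1$, then the Riemann--Roch theorem for graphs gives $r_G(\delta) = d-g$, which is at most $\frac{d}{2}$ because $d \le 2g-2$. Past this bookkeeping there is no real obstacle: the substance of Theorem~C has already been absorbed into Theorem~A. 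Conceptually, the reason the \emph{uniform} algebraic rank is the right tool here — as opposed to Caporaso's $\ralg$, whose minimum over representatives sits \emph{inside} the maximum over curves and can therefore be realized by different multidegrees on different curves — is precisely that its minimum is taken on the outside, producing a single multidegree $\md^{\ast}$ that satisfies the Clifford bound simultaneously on every curve with dual graph $G$.
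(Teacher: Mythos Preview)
Your proof is correct and follows essentially the same approach as the paper: pick a representative $\md$ realizing the outer minimum in the definition of $\ralgg(G,\delta)$, and use the chain $\ralgg(G,\delta)\le r_G(\delta)\le \tfrac{d}{2}$ (the paper packages the latter inequality as Corollary~\ref{cor:CliffALG}). Your explicit check that the minimum is attained and the case analysis for non-special classes are nice touches; the closing remark on why $\ralgg$ rather than $\ralg$ yields a single multidegree working for all curves is exactly the point.
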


The existence of Clifford representatives in any divisor class is far from obvious, as it is well-known that even for reasonably well-behaved divisors $\md$, the Clifford inequality can be violated by some line bundles, cf. \cite[\S 4.3]{caporasosemistable} and \cite{Clifford}. Following the idea of \cite{CLM} for the algebraic rank, our proof combines the inequality in \eqref{eq:inequalities} with the fact due to Baker and Norine \cite{BN} that their combinatorial rank $r_G(\delta)$ satisfies the Clifford inequality. In particular, the argument is non-constructive and the remainder of the paper is devoted to the construction of explicit Clifford representatives. 

As is clear from the argument above, any divisor $\md$ that realizes the uniform algebraic rank is a Clifford representative.
Let us compare the question of constructing such representatives to another class of representatives $\md \in \delta$, the semibalanced divisors used in the construction of universal compactified Jacobians (see Definition~\ref{balanced}). Any divisor class $\delta$ admits semibalanced representatives, and they are explicit divisors that minimize the minimal rank of line bundles on curves with dual graph $G$ in $\delta$ by \cite[Theorem 1.2]{Chr23}. Our motivation here, on the other hand, is to find representatives that minimize the maximal rank of line bundles on curves with dual graph $G$ in a fixed divisor class $\delta$. Since semibalanced divisors are in general not Clifford representatives \cite[\S 4.3]{caporasosemistable}, these notions do not coincide.

Our main result regarding the question of finding divisors that realize the uniform algebraic rank is Theorem~\ref{thm:effective realization}. It shows that to calculate the algebraic and uniform algebraic rank one can restrict to effective divisors $\md$, that is, divisors with non-negative value on each vertex of $G$ (notice that there are only finitely many of these in a given divisor class):
\begin{ThmD}
    Let $\delta$ be an effective divisor class on a graph $G$.
    Then both the algebraic rank $\ralg(G, \delta)$ and the uniform algebraic rank $\ralgg(G, \delta)$ can be realized by an \emph{effective representative} $\md \in \delta$. 
\end{ThmD}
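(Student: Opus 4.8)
The plan is to reduce everything to a single, curve-independent \emph{reduction move} on divisors that pushes a representative towards effectivity without increasing the relevant ranks. For a curve $X \in M_G$ with irreducible components $\{C_v\}_{v \in V(G)}$ and a divisor $\ud \in \delta$, write
\[\rho_X(\ud) := \max_{L \in \Pic^{\ud}(X)} r(X,L),\]
so that $\ralg(G,\delta) = \max_{X} \min_{\ud} \rho_X(\ud)$ and $\ralgg(G,\delta) = \min_{\ud} \max_X \rho_X(\ud)$. For a vertex $v$ let $\Delta_v := \mdeg \cO_X(C_v)$, so that adding $\Delta_v$ is the chip-firing move at $v$; then $\ud - \Delta_v \in \delta$ is obtained by reverse-firing $v$, raising the value at $v$ by the valence $\deg_G(v)$ and lowering it on the neighbours.

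The key step is the following \textbf{Reduction Lemma}: if $\ud(v) < 0$, then $\rho_X(\ud - \Delta_v) \le \rho_X(\ud)$ for every $X \in M_G$. To prove it I would fix any $M \in \Pic^{\ud - \Delta_v}(X)$ and compare it with $L := M \otimes \cO_X(C_v) \in \Pic^{\ud}(X)$, which has $\deg_{C_v} L = \ud(v) < 0$. Writing $X = C_v \cup Y$ with $Y = \overline{X \ssm C_v}$ glued along the node divisor $N$ of degree $\deg_G(v)$, the Mayer--Vietoris sequence for $L$ gives $h^0(X, L) = h^0(Y, L|_Y(-N)) = h^0(Y, M|_Y)$, because $H^0(C_v, L|_{C_v}) = 0$. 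For $M$ the same sequence reads $h^0(X, M) = h^0(C_v, M|_{C_v}) + h^0(Y, M|_Y) - \operatorname{rk}(\phi_M)$, with $\phi_M$ the difference-at-nodes map; since $\deg\big(M|_{C_v}(-N)\big) = \ud(v) < 0$, evaluation of $H^0(C_v, M|_{C_v})$ at the nodes is injective, so $\operatorname{rk}(\phi_M) \ge h^0(C_v, M|_{C_v})$ and hence $h^0(X, M) \le h^0(Y, M|_Y) = h^0(X, L)$. As $M \mapsto M \otimes \cO_X(C_v)$ is a bijection $\Pic^{\ud - \Delta_v}(X) \to \Pic^{\ud}(X)$, taking maxima yields $\rho_X(\ud - \Delta_v) \le \rho_X(\ud)$. (Loops at $v$, i.e. self-nodes of $C_v$, are absorbed into the same bookkeeping after partial normalization.)

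Next I would iterate the move to reach an effective representative. Starting from any $\ud \in \delta$, while some $\ud(v) < 0$ I replace $\ud$ by $\ud - \Delta_v$. To see that this terminates at an effective divisor, fix an effective $\me \in \delta$ (which exists since $\delta$ is effective) and write the current representative as $\ud = \me + \Delta f$ for $f \colon V(G) \to \ZZ$; each move replaces $f$ by $f - \mathbb{1}_v$, so $\sum_v f(v)$ strictly decreases at every step. On the other hand, a vertex $v$ where $f$ attains its global minimum has $(\Delta f)(v) \ge 0$, hence $\ud(v) = \me(v) + (\Delta f)(v) \ge 0$ and is never fired; thus $\min_v f(v)$ never decreases and $f$ stays bounded below. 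A strictly decreasing, bounded-below integer sum forces the process to stop, and it can stop only when $\ud \ge 0$.

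Finally I would assemble the two statements. Since the reduction process is purely combinatorial, running it from an arbitrary $\ud \in \delta$ produces an effective $\md \in \delta$ with $\rho_X(\md) \le \rho_X(\ud)$ for \emph{every} $X \in M_G$ simultaneously, by repeated application of the Reduction Lemma. Consequently, for each fixed $X$ we obtain $\min_{\ud \in \delta} \rho_X(\ud) = \min_{\md} \rho_X(\md)$, the right-hand minimum ranging over the finite set of effective divisors in $\delta$; taking $\max_X$ then shows that $\ralg(G,\delta)$ is realized by an effective divisor. For $\ralgg$, the uniformity in $X$ gives $\max_X \rho_X(\md) \le \max_X \rho_X(\ud)$ for the same $\md$, whence $\min_{\ud} \max_X \rho_X(\ud) = \min_{\md} \max_X \rho_X(\md)$ over effective $\md$, again realized by an effective divisor. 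The main obstacle is the Reduction Lemma: the cohomological comparison must hold uniformly over the positive-dimensional torsor $\Pic^{\ud}(X)$ and for all $X \in M_G$ at once, with care taken for multiple edges and loops; by contrast the termination argument is elementary.
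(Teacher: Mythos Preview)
Your argument is correct in substance and follows the same overall strategy as the paper: exhibit a chip-firing move, depending only on $G$, that pushes $\ud$ towards effectivity while not increasing $\rmax(X,\cdot)$ for any $X$, and then iterate. The cohomological heart is identical in both---a component carrying negative degree forces sections to vanish there---but the packaging differs. The paper fires the entire Dhar set $V_n$ built from the locus where $\ud<0$, invoking the injectivity Lemma~\ref{lma:injectivity} for $V$-reduced divisors, and appeals to \cite[Algorithm~3.10]{C23} for termination. You instead fire a single negative vertex at a time and give a self-contained potential argument (the Laplacian preserves $\min f$ while strictly decreasing $\sum f$). Your route is more elementary and avoids the external reference; the paper's route ties the construction to the theory of $V$-reduced divisors and the Dhar decomposition, which is useful elsewhere in the paper.

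One imprecision worth fixing: the object $\cO_X(C_v)$ is not a line bundle on $X$, since $C_v$ is not Cartier at the nodes (the ideal sheaf $\cI_Y$ is torsion-free of rank one but not invertible there). What you actually need---and what your computation uses---is a line bundle $T$ of multidegree $\underline t_{\{v\}}$ with $T|_Y\cong\cO_Y(N)$; such $T$ exist (choose any gluing data), and tensoring by a fixed choice gives the bijection $\Pic^{\ud-\Delta_v}(X)\to\Pic^{\ud}(X)$ you want. Equivalently, and this is how the paper phrases it, given $M$ one constructs $L$ directly by prescribing $L|_{C_v}$ of degree $\ud(v)$ and $L|_Y=M|_Y(N)$ and gluing. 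With this adjustment your Reduction Lemma and the rest of the argument go through as written.
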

Notice that if $\delta$ is not effective, then it follows from \eqref{eq:inequalities} that $\ralg(G,\delta)=\ralgg(G,\delta)=-1$.

Concretely, Theorem D states that there is an effective divisor $\md \in \delta$ and a curve $X$ with dual graph $G$ such that \[\ralg(G, \delta) = \max_{L \in \Pic^{\md}(X)}\left \{r(X, L) \right\}\] and that there is an effective divisor $\md' \in \delta$ such that 
\[\ralgg(G, \delta) = \max_{X \in M_G} \left \{\max_{L \in \Pic^{\md'}(X)}\{r(X, L)\} \right \}.\]
In particular, for the uniform algebraic rank, this jointly with equation \eqref{eq:inequalities}, implies that for all algebraic curves $X$ with dual graph $G$ and for all line bundles $L$ on $X$ of combinatorial type $\md'$,  $r(X, L)\leq \ralgg(G, \delta)\leq r_G(\delta)$.

Finally, we construct explicit Clifford representatives for a large class of graphs in Theorem~\ref{thm:main_detailed} using previous results from \cite{Clifford, Chr23} (though they need not in general realize the uniform algebraic rank). 
In particular, this includes all graphs without bridges and whose vertex weights are different from $0$. Previously, such a construction was only known for $d = 0$ or $d = 2g -2$, if $G$ has no bridges and $d \leq 4$, or $G$ has at most $2$ vertices by \cite{caporasosemistable}.

\medskip

\noindent 
{\bf Acknowledgements.} It is our pleasure to thank Lucia Caporaso for her helpful comments on a previous version of the paper. 

\section{Notation and preliminaries}

\subsection{Graphs and divisors}
\label{subsec:graphs and divisors}

Throughout the paper we will denote with $G=(V, E, \omega)$ a finite vertex-weighted graph, where $V=V(G)$ denotes the set of vertices of $G$, $E=E(G)$ its set of edges and $\omega:V \to \ZZ_{\geq 0}$ its weight function. If $\omega=0$, $G$ is called \emph{weightless}. The graph $G$ may contain multiple edges or loops. Unless otherwise stated, we will assume that $G$ is connected.

We will denote with $\val (v)$ the valence of a vertex $v$, i.e., the number of edges adjacent to $v$, with loops counting twice.

The \emph{genus} of a connected graph $G$ is
$$g = g(G):= |E(G)|-|V(G)|+1 + \sum_{v\in V(G)}\omega(v).$$

\begin{defn} \label{stablegraph} Let $G$ be a connected graph of genus $g \geq 2$. We say that $G$ is a \emph{stable} (respectively \emph{semistable}) graph if every vertex of weight zero has valence at least $3$  (respectively $2$). 
\end{defn}

Fixing an ordering $V(G)=\{v_1,\ldots,v_\lambda\}$,
we denote by $\Div(G)$ the free $\ZZ$-module generated by elements of $V(G)$, i.e.,
$$\Div(G):=\Bigg\{\ud=\sum_{i=1}^{\lambda} d_iv_i, \, d_i\in \ZZ\Bigg\}\cong\ZZ^{\lambda}.$$

The \textit{degree} of $\ud=(d_1,\ldots, d_{\lambda})$ is the integer $| \ud |:=\sum_{i=1}^{\lambda} d_i$. 
A divisor $\ud$ is \textit{effective} if $d_i \geq 0$ for all $v_i \in V$. In this case we write $\ud\geq 0$. The subset of divisors of degree $d$ is denoted by $\Div^{d}(G)$ and $\Div_+(G)$ denotes the subset of effective divisors in $\Div(G)$. Furthermore, the subset of effective divisors of degree $d$ is denoted by $\Div^d_+(G)$. Given $Z\subseteq V(G)$ we write $\ud(Z)= \sum_{v_i \in Z} d_i.$

The \textit{canonical divisor} $\underline{k}_G$ on $G$ is the divisor with value at a vertex $v$ of $G$ given by
$$\underline{k}_G(v)=2\omega(v) - 2 + \val(v).$$
Notice that $|\underline{k}_G| = 2 g(G) - 2$.

The group $\Div(G)$ is endowed with an intersection product associating an integer, written $\ud_1\cdot \ud_2$, to $\ud_1,\ud_2\in \Div(G)$.
It is given by linearly extending the following rule on vertices: 
If $v_1\neq v_2$ we set  $v_1 \cdot v_2$ to be equal to the number of edges joining $v_1$ with $v_2$, whereas \linebreak
$v_1\cdot v_1=-\sum _{v\in V\smallsetminus \{v_1\}}v\cdot v_1$.

Given a subset $Z\subset V(G)$, we set $Z^{c}:= V(G)\setminus Z$ and define the divisor $\underline{t}_Z$ such that
$$\underline{t}_Z(v) := \begin{cases}
v\cdot Z & \text{ if } v\notin Z \\
-v\cdot Z^c & \text{ if } v\in Z,
\end{cases}$$
where we identify $Z\subset V$ with the divisor $\sum_{v\in Z}v$.
Divisors of the form $\underline{t}_Z$ generate a subgroup of $\Div^0(G)$, denoted by $\Prin(G)$, and whose elements are called \textit{principal divisors}. We say that two divisors $\ud$ and $\ud'$ are \textit{linearly equivalent} if their difference is a principal divisor, and we write $\ud \sim\ud'$.  We define the Picard group\footnote{The Picard group of a graph is known in the literature under many different names, as the degree class group, the Laplacian group, the sandpiles group, etc. It is a consequence of Kirchhoff's matrix tree theorem that, for fixed degree $d$, the cardinality of $\Pic^d(G)$ is finite and equals the number of spanning trees of the graph $G$ (see \cite{BMS} for some properties of $\Pic(G)$ and a proof of the matrix tree theorem.)} of $G$
$$\Pic (G) = \Div (G) /\sim.$$ The equivalence class of a divisor $\ud$ is denoted by $[\ud]$. We also use the notation $\delta$ for an element of $\Pic(G)$, and we write $\ud \in \delta$ if $\ud$ is a representative. Since the principal divisors have degree zero, equivalent divisors have the same degree. We set, for an integer $d$,
$$\Pic^d(G)=\Div^d(G) / \sim.$$
We call a divisor class $\delta$ effective if it contains an effective representative $\md$.

\subsection{Semibalanced and reduced divisors}
\label{sec:semibalanced}

In what follows, we will discuss two important classes of representatives in a given equivalence class $\delta \in \Pic(G)$. The first are the semibalanced divisors that are the multidegrees of semistable line bundles in the definition of universal compactified Jacobians; that is, divisors that appear as multidegrees of line bundles that are semistable with respect to the canonical polarization. Their definition is purely combinatorial, and we refer, for example, to \cite{Caporasocompactification, MF12} for their connection to compactified Jacobians.

\begin{defn} \label{balanced} Let $G$ be a semistable graph of genus $g\geq 2$, and let $\ud \in \Div^d G$. Given a subset $Z \subset V(G)$, we define the parameters
$$m_Z(d) := d \:  \frac{ \underline{k}_G(Z)}{2g-2} - \frac{(Z\cdot Z^c)}{2} \mbox{ and } M_Z(d):=  d \: \frac{ \underline{k}_G(Z)}{2g-2} + \frac{(Z\cdot Z^c)}{2}.$$
 We say that $\ud$ is \textit{semibalanced} if for every $Z\subset V(G)$ the following inequality holds:
$$m_Z(d) \leq \ud(Z)\leq M_Z(d).$$
\end{defn}

\begin{prop}\cite[Proposition 4.1]{Caporasocompactification}
    Let $G$ be a semistable graph. Then any divisor class $\delta \in \Pic(G)$ contains a semibalanced representative.
\end{prop}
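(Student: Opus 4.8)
\emph{Setup.} Fix the ordering $V(G)=\{v_1,\dots,v_\lambda\}$, identify $\Div(G)$ with $\ZZ^\lambda$ via this basis, and let $I=(v_i\cdot v_j)_{i,j}$ be the symmetric matrix of the intersection product. Writing $\mathbf 1_Z\in\{0,1\}^\lambda$ for the characteristic vector of a subset $Z\subseteq V(G)$, the definition of $\underline t_Z$ unwinds to the identity $\underline t_Z=I\,\mathbf 1_Z$, and one checks directly that $\mathbf 1_Z^{\,T}I\,\mathbf 1_Z=-(Z\cdot Z^c)$. Moreover $-I$ is positive semidefinite with kernel $\RR\cdot\mathbf 1_{V(G)}$: indeed $-z^{\,T}Iz=\sum_{e\in E(G)}(z_v-z_w)^2$, the sum over the two endpoints $v,w$ of each edge $e$ (a loop contributing $0$), which vanishes precisely when $z$ is constant since $G$ is connected. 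Finally introduce the real \emph{barycenter} $\ud_0\in\Div^d(G)_{\RR}$ with $\ud_0(v)=d\,\underline{k}_G(v)/(2g-2)$; then $\ud_0(Z)=d\,\underline{k}_G(Z)/(2g-2)$ is exactly the midpoint of the interval $[m_Z(d),M_Z(d)]$, so that $\ud\in\Div^d(G)$ is semibalanced if and only if $|\ud(Z)-\ud_0(Z)|\le\tfrac12(Z\cdot Z^c)=-\tfrac12\,\mathbf 1_Z^{\,T}I\,\mathbf 1_Z$ for every $Z\subseteq V(G)$.

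\emph{Producing the representative.} Fix any $\ud_{\mathrm{base}}\in\delta$. Since $\ud_0-\ud_{\mathrm{base}}$ has degree $0$, it lies in the image of $I$ over $\RR$ (which is the degree-$0$ subspace, the orthogonal complement of $\mathbf 1_{V(G)}$), say $\ud_0-\ud_{\mathrm{base}}=Ix_0$ with $x_0\in\RR^\lambda$. As $\Prin(G)=I\ZZ^\lambda$ (its generators $\underline t_{\{v_j\}}$ being the columns of $I$), the representatives of $\delta$ are precisely the divisors $\ud_0+Iz$ with $z$ ranging over the coset $\Lambda:=-x_0+\ZZ^\lambda$, and for such $\ud=\ud_0+Iz$ one has $\ud(Z)-\ud_0(Z)=\mathbf 1_Z^{\,T}Iz$. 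Now consider $Q(z):=-z^{\,T}Iz\ge 0$. It descends to a positive-definite quadratic form on $\RR^\lambda/\RR\mathbf 1_{V(G)}$, on which the image of $\Lambda$ is a coset of a full-rank lattice; hence $Q$ attains a minimum on $\Lambda$. Choose $z^\ast$ realizing this minimum and set $\ud^\ast:=\ud_0+Iz^\ast\in\delta$.

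\emph{$\ud^\ast$ is semibalanced.} Suppose not. If $\ud^\ast(Z)>M_Z(d)$ for some $Z$, that is $\mathbf 1_Z^{\,T}Iz^\ast>-\tfrac12\,\mathbf 1_Z^{\,T}I\,\mathbf 1_Z$, replace $z^\ast$ by $z':=z^\ast+\mathbf 1_Z\in\Lambda$, which corresponds to the representative $\ud^\ast+\underline t_Z\in\delta$. Expanding and using symmetry of $I$,
\[ Q(z')-Q(z^\ast)=-2\,\mathbf 1_Z^{\,T}Iz^\ast-\mathbf 1_Z^{\,T}I\,\mathbf 1_Z<\mathbf 1_Z^{\,T}I\,\mathbf 1_Z-\mathbf 1_Z^{\,T}I\,\mathbf 1_Z=0, \]
contradicting the minimality of $z^\ast$. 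If instead $\ud^\ast(Z)<m_Z(d)$ for some $Z$, the symmetric move $z^\ast\mapsto z^\ast-\mathbf 1_Z$ (i.e.\ $\ud^\ast-\underline t_Z$) likewise strictly decreases $Q$; alternatively, one observes that the lower bound for $Z$ is exactly the upper bound for $Z^c$, so it suffices to rule out violations of the upper bounds. Hence $\ud^\ast$ is a semibalanced representative of $\delta$.

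\emph{Where the difficulty lies.} The computational core is the pair of identities $\underline t_Z=I\,\mathbf 1_Z$ and $Z\cdot Z^c=-\mathbf 1_Z^{\,T}I\,\mathbf 1_Z$, which convert the comparison of the energies $Q$ into the one-line estimate above; these are routine but must be carried out with care about signs and conventions. The one genuinely delicate point is that $Q$ is a priori only positive \emph{semi}definite, so to know that its minimum over the lattice coset $\Lambda$ is actually attained one must pass to the quotient $\RR^\lambda/\RR\mathbf 1_{V(G)}$, where connectedness of $G$ makes $Q$ positive definite — this is the only place connectedness is used, while semistability and $g\ge 2$ enter solely to make $m_Z(d)$ and $M_Z(d)$ well-defined.
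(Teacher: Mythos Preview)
The paper does not contain a proof of this proposition; it merely cites \cite[Proposition~4.1]{Caporasocompactification}. Your argument is correct and is, in fact, the standard energy-minimization proof: interpret $-I$ as the graph Laplacian, write the semibalanced condition as $|\mathbf 1_Z^{\,T}Iz|\le -\tfrac12\,\mathbf 1_Z^{\,T}I\,\mathbf 1_Z$, and minimize the quadratic form $Q(z)=-z^{\,T}Iz$ over the relevant lattice coset, using that a violation of an upper bound at $Z$ lets the chip-firing move $z\mapsto z+\mathbf 1_Z$ strictly decrease $Q$. All the identities you use ($\underline t_Z=I\,\mathbf 1_Z$, $\mathbf 1_Z^{\,T}I\,\mathbf 1_Z=-(Z\cdot Z^c)$, kernel of $-I$ spanned by $\mathbf 1_{V(G)}$) check out with the paper's conventions.

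One small expository quibble: in your closing remark, semistability is not actually needed to make $m_Z(d)$ and $M_Z(d)$ well-defined --- only $g\ge 2$ is, and both are built into Definition~\ref{balanced}. Your proof uses connectedness (for the kernel of $-I$) and $g\ge 2$ (to form the barycenter $\ud_0$), but nowhere invokes the valency condition in semistability.
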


\begin{rmk}
    Semibalanced representatives are not necessarily unique in their equivalence class $\delta$. They are unique in every class $\delta$ of degree $d$ if
    $G$ is stable and $d-g+1$ and $2g-2$ are coprime.

    In the literature, semibalanced divisors are sometimes also called semistable. Furthermore, semibalanced divisors in degree $d = g - 1$ are exactly the orientable divisors; semibalanced divisors in degree $g$ can be described in terms of generalized orientations and are precisely the so-called break divisors (see \cite{CC19, CPS} for further details). 
\end{rmk}

\medskip

A second class of divisors, reduced divisors, has featured prominently in the study of the Baker-Norine rank, which will be discussed below.

\begin{defn} \label{def:reduced} Let $\ud$ be a divisor on a 
graph $G$ and fix a subset of vertices $V \in  V(G)$. We say that $\ud$ is $V$-\textit{reduced} if
\begin{enumerate}
\item $\ud(v)\geq 0$, for all $v \in V(G)\setminus V$;
\item for every non-empty set $A \subset V(G)\setminus V$, there exists a vertex $v\in A$ such that $\ud(v)< v \cdot A^{c}$.
\end{enumerate}
\end{defn}

If $V = \{u\}$ consists of a single vertex, we will call $\{u\}$-reduced divisors just $u$-reduced divisors. This is the classical case, the above generalization was introduced in \cite{B21, C23} independently by the first and second authors. Notice also that even if the original definitions were made for graphs with no loops or weights, both the definition and the proposition below immediately generalize to arbitrary graphs.

\begin{prop}\cite[Proposition 3.1]{BN} \label{exiured} Let $G$ be a 
graph and fix a vertex $u\in V(G)$. Then for every divisor $\ud$ of $G$ there exists a unique $u$-reduced divisor $\ud'\in \Div(G)$ in the equivalence class of $\ud$.
\end{prop}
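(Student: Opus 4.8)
My plan is to prove existence and uniqueness separately, following the now-standard argument of Baker–Norine.

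For uniqueness, the plan is to suppose $\ud'$ and $\ud''$ are both $u$-reduced and linearly equivalent, so $\ud'' = \ud' + \dv(f)$ for some integer-valued function $f$ on $V(G)$. After possibly swapping the two divisors, I would look at the set $A$ of vertices where $f$ attains its maximum value; the key point is that if $u \notin A$, then for every vertex $v \in A$ one has $(\dv f)(v) \geq v \cdot A^c$ (each neighbor outside $A$ contributes a non-negative amount to $\dv f$ at $v$, since $f$ drops there), which combined with property (1) for $\ud'$ forces $\ud''(v) \geq v \cdot A^c$ for all $v \in A$, contradicting property (2) for $\ud''$ with this choice of $A$. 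Hence $u \in A$, so $f$ attains its maximum at $u$; running the symmetric argument with the roles of $\ud'$ and $\ud''$ reversed shows $-f$ also attains its maximum at $u$, whence $f$ is constant and $\ud' = \ud''$.

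For existence, the plan is to argue in two stages. First, within the equivalence class pick a representative $\ud$ that satisfies (1), i.e.\ is effective away from $u$ — this is possible because one can repeatedly fire $u$ (subtract $\underline t_{\{u\}}$, or rather add it) until every other vertex is non-negative; one checks this process terminates, e.g.\ by tracking the total degree accumulated away from $u$ or by a standard potential-theoretic argument. Among all representatives satisfying (1), the degree on $V(G)\setminus\{u\}$ is bounded below (by $0$) and above (by $|\ud|$ plus, if negative, nothing — in any case it is a non-negative integer bounded by $\max(|\ud|,0)$ once (1) holds is not quite right; rather: any such representative has $\ud(V\setminus\{u\}) \le$ something fixed). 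The cleaner formulation: among representatives satisfying (1), choose one minimizing $\ud(V(G)\setminus\{u\})$; if it failed (2), there would be a non-empty $A \subseteq V(G)\setminus\{u\}$ with $\ud(v) \geq v \cdot A^c$ for all $v \in A$, and then firing the set $A$ (replacing $\ud$ by $\ud - \underline t_{A}$, which removes $v\cdot A^c$ from each $v\in A$ and adds non-negative amounts elsewhere) produces a new representative still satisfying (1) but with strictly smaller degree on $V(G)\setminus\{u\}$, a contradiction.

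The main obstacle is making the termination/finiteness arguments airtight: ensuring that the initial reduction to a representative satisfying (1) terminates, and that the minimization in the second stage is over a set where the minimum is actually attained (the relevant quantity is a non-negative integer, so this is fine once one verifies the set of representatives satisfying (1) is non-empty). Since the statement is cited directly as \cite[Proposition 3.1]{BN} and the excerpt explicitly notes the generalization to graphs with loops and weights is immediate — loops never affect $v \cdot A$ or $\dv f$, and weights do not enter Definition~\ref{def:reduced} at all — I would simply remark that the Baker–Norine proof goes through verbatim in this generality, and not reproduce it in full.
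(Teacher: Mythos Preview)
The paper does not give its own proof of this proposition: it is stated with the citation \cite[Proposition 3.1]{BN}, and the sentence preceding it simply remarks that the Baker--Norine definition and proof carry over verbatim to graphs with loops and weights. Your final suggestion---to cite \cite{BN} and note that the argument extends---is therefore exactly what the paper does.

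The detailed sketch you provide is the standard Baker--Norine argument and is correct in outline. One small sign slip to watch: with the paper's convention $\underline{t}_A(v) = -\,v\cdot A^c$ for $v\in A$, ``firing $A$'' (sending chips from $A$ outward) is $\ud \mapsto \ud + \underline{t}_A$, not $\ud - \underline{t}_A$; the rest of your existence argument then goes through as written.
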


\subsection{The Baker-Norine combinatorial rank} Now we discuss the notion of the purely combinatorial rank that  was introduced by
Baker and Norine in \cite{BN}.
Let $G$ be a loopless and weightless graph and $\ud\in\Div(G)$ a divisor. The \textit{Baker-Norine rank} of $\ud$ is defined by
$$r_G(\ud)=\max \{k:\forall \underline{e}\in \Div_{+}^k(G), \ \ \exists \ \ \underline{d'}\sim \ud \mbox{ such that } \underline{d'}-\underline{e}\ge 0 \}$$
with $r_G(\ud)=-1$ if the set is empty.

Let $G$ be a graph possibly with weights or loops, and consider the weightless and loopless graph $G^{\bullet}$ defined by attaching at each vertex $v\in V(G)$ exactly $\omega(v)$ loops based at $v$, and then adding, at each loop, one new vertex subdividing the edge.
For each $\ud \in \Div(G)$,  we can naturally  define $\ud^{\bullet}\in \Div(G^{\bullet})$ by setting $\ud^{\bullet}$ to be $0$ at the new vertices of $G^\bullet$.
In this way we get a natural injective homomorphism $\iota: \Div(G) \to \Div(G^{\bullet})$ inducing an injective homomorphism $\Pic(G)\hookrightarrow\Pic(G^{\bullet})$.
The definition of the Baker-Norine rank has been extended in \cite{AC13} to arbitrary graphs $G$ by setting for any divisor $\ud \in \Div(G)$: $$r_G(\ud):= r_{G^{\bullet}}(\iota(\ud)).$$

Notice that
the Baker-Norine rank is constant in an equivalence class by definition. We therefore write $r_G(\delta) = r_G(\ud)$ if $\ud$ is a representative of the divisor class $\delta = [\ud]$.

Baker and Norine \cite{BN} also proved that this rank on weightless and loopless graphs satisfies the Riemann-Roch theorem. Amini and Caporaso \cite{AC13} extended this result to arbitrary graphs.

\begin{thm}[Riemann-Roch Theorem for graphs]\cite{BN, AC13}\label{rrg} Let $G$ be a
graph of genus $g$, possibly with weights and loops, and $\ud\in \Div(G)$ a divisor of degree $d$. We have
	$$r_G(\ud)-r_G(\underline{k}_G - \ud)=d-g+1.$$	
\end{thm}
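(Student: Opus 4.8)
The plan is to reduce the general statement to the weightless, loopless case and then reproduce the combinatorial argument of Baker and Norine. Recall from the excerpt that for an arbitrary graph $r_G(\ud)$ is \emph{defined} as $r_{G^{\bullet}}(\iota(\ud))$, where $\iota\colon \Div(G)\hookrightarrow\Div(G^{\bullet})$ inserts zeros on the midpoint vertices of $G^{\bullet}$. I would first record three compatibilities, each a direct bookkeeping of edges, vertices, valences and weights: (i) $\iota$ preserves degree; (ii) $g(G^{\bullet})=g(G)$, since subdividing the loops added to kill the weights raises $|E|-|V|$ by exactly $\sum_v\omega(v)$, offsetting the removed weight contribution; and (iii) $\iota(\un{k}_G)=\un{k}_{G^{\bullet}}$, since each new midpoint has valence $2$ and weight $0$ (hence canonical value $0$), while at an original vertex the added valence $2\omega(v)$ compensates the lost weight term. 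Because $\iota$ is a homomorphism, (i)--(iii) give
\[
r_G(\ud)-r_G(\un{k}_G-\ud)=r_{G^{\bullet}}(\iota(\ud))-r_{G^{\bullet}}(\un{k}_{G^{\bullet}}-\iota(\ud)),
\]
so the identity on $G$ follows from the identity on the weightless, loopless graph $G^{\bullet}$ together with $\deg\iota(\ud)=d$ and $g(G^{\bullet})=g$.

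For the weightless, loopless case I would follow Baker--Norine. The starting point is the reformulation, immediate from the definition of the rank,
\[
r_G(\ud)+1=\min\{\,|\me|:\me\in\Div_+(G),\ r_G(\ud-\me)=-1\,\},
\]
that is, the rank is one less than the least degree of an effective divisor whose removal produces a class not equivalent to an effective divisor. Since any class of degree at least $g$ is equivalent to an effective divisor (a standard property of reduced divisors), all classes of rank $-1$ have degree at most $g-1$, and the theorem reduces to understanding the set $\mathcal{N}$ of rank-$(-1)$ classes of degree exactly $g-1$. The combinatorial heart is the symmetry statement: the involution $\delta\mapsto[\un{k}_G]-\delta$, which preserves degree $g-1$, maps $\mathcal{N}$ bijectively to itself.

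To prove this symmetry I would characterize $\mathcal{N}$ through acyclic orientations. To an acyclic orientation $\mathcal{O}$ of $G$ associate the divisor $\nu_{\mathcal{O}}:=\sum_v(\outdeg_{\mathcal{O}}(v)-1)\,v$, of degree $|E|-|V|=g-1$. Using Dhar's burning algorithm and the uniqueness of reduced divisors (Proposition~\ref{exiured}), one shows that a class of degree $g-1$ has rank $-1$ precisely when it is represented by some $\nu_{\mathcal{O}}$, with $\mathcal{O}$ chosen to have a unique source once a base vertex is fixed. The symmetry is then immediate from the identity $\un{k}_G=\nu_{\mathcal{O}}+\nu_{\overline{\mathcal{O}}}$, where $\overline{\mathcal{O}}$ denotes the reversed orientation: reversal is a degree-preserving bijection on acyclic orientations interchanging the source and sink conditions, so $[\un{k}_G]-[\nu_{\mathcal{O}}]=[\nu_{\overline{\mathcal{O}}}]$ again lies in $\mathcal{N}$.

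Establishing this orientation characterization of $\mathcal{N}$ is the step I expect to be the main obstacle: it is where the combinatorics of reduced divisors does the real work, matching the rank-$(-1)$ classes of degree $g-1$ with acyclic orientations and checking that reversal interacts correctly with the base-vertex and source conditions. Granting it, the general identity follows by Baker--Norine's bookkeeping: using the minimal-obstruction reformulation above together with the degree-$(g-1)$ symmetry, one first establishes the single inequality $r_G(\ud)\le d-g+1+r_G(\un{k}_G-\ud)$; applying this same inequality with $\un{k}_G-\ud$ in place of $\ud$ and adding the two yields the reverse inequality, hence the claimed equality $r_G(\ud)-r_G(\un{k}_G-\ud)=d-g+1$.
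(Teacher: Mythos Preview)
The paper does not give its own proof of this theorem: it is stated as a cited result from \cite{BN} (weightless, loopless case) and \cite{AC13} (extension to arbitrary weighted graphs), with no argument supplied. So there is nothing to compare your proposal against in the paper itself.

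That said, your outline is the standard route taken in those references and is essentially correct. The reduction to $G^{\bullet}$ via the three compatibilities (i)--(iii) is exactly how \cite{AC13} proceeds, and your bookkeeping is accurate: subdivision of loops preserves $|E|-|V|$, the added loops at $v$ raise the first Betti number by $\sum_v\omega(v)$ to offset the lost weight, and the canonical divisor matches vertex by vertex. For the core weightless, loopless case you are following \cite{BN} faithfully: the reformulation of $r_G(\ud)+1$ as a minimal obstruction degree, the description of the degree-$(g-1)$ non-effective locus $\mathcal{N}$ via divisors $\nu_{\mathcal{O}}$ of acyclic orientations, and the symmetry under $\delta\mapsto[\un{k}_G]-\delta$ coming from orientation reversal. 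One small caution: when you invoke that every class of degree at least $g$ is effective, be sure to justify this via reduced divisors (a $u$-reduced divisor of degree $\geq g$ has non-negative value at $u$) rather than via Riemann--Roch itself, to avoid circularity. With that in hand, your final symmetrization argument (prove one inequality, then swap $\ud$ and $\un{k}_G-\ud$) is the clean way to close.
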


Furthermore, it is easy to see that for any two divisors $\ud, \ud' \in \Div(G)$ one has \[r_G(\ud) + r_G(\ud') \leq r_G(\ud + \ud').\] This, together with the Riemann-Roch theorem for graphs, immediately implies the Clifford inequality for the Baker-Norine rank:

\begin{cor}[Clifford inequality]\cite[Corollary 3.5]{BN} \label{cor:cliffBN}
    For any divisor class $\delta$ of degree $0 \leq d \leq 2g -2$, we have \[r_G(\delta) \leq \frac{d}{2}.\] 
\end{cor}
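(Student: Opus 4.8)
The statement to prove is the Clifford inequality for the Baker--Norine rank: for a divisor class $\delta$ of degree $0 \leq d \leq 2g-2$ on a graph $G$ of genus $g$, one has $r_G(\delta) \leq \frac{d}{2}$. The excerpt already supplies the two ingredients needed, so the plan is essentially to combine them cleanly.

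\textbf{Plan.} The two tools at hand are: (i) the \emph{subadditivity} of the Baker--Norine rank, $r_G(\ud) + r_G(\ud') \leq r_G(\ud + \ud')$ for any $\ud, \ud' \in \Div(G)$, which the excerpt states is easy to check; and (ii) the Riemann--Roch theorem for graphs, $r_G(\ud) - r_G(\un{k}_G - \ud) = d - g + 1$. First I would dispose of a trivial case: if $r_G(\delta) \leq 0$ there is nothing to prove since $d \geq 0$. So assume $r := r_G(\delta) \geq 1$; in particular $\delta$ is effective. By Riemann--Roch applied to $\un k_G - \ud$ (which has degree $2g-2-d \geq 0$), we get $r_G(\un k_G - \ud) = r_G(\ud) - (d - g + 1) = r - d + g - 1$. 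Now apply subadditivity to the pair $\ud$ and $\un k_G - \ud$, whose sum is $\un k_G$: since $r_G(\un k_G) = g - 1$ (either by definition-chasing or by Riemann--Roch with $\ud = \un k_G$, which gives $r_G(\un k_G) - r_G(\underline{0}) = (2g-2) - g + 1 = g-1$ and $r_G(\underline 0) = 0$), we obtain
\[ r_G(\ud) + r_G(\un k_G - \ud) \leq r_G(\un k_G) = g - 1. \]
Substituting the Riemann--Roch expression for $r_G(\un k_G - \ud)$ yields $r + (r - d + g - 1) \leq g - 1$, i.e. $2r - d \leq 0$, i.e. $r \leq d/2$, which is the claim.

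\textbf{Remaining points.} The one small gap to fill is the verification that $r_G(\un k_G - \ud) \geq 0$ is not actually needed — subadditivity holds for all divisors, effective or not, with the convention $r_G = -1$ for non-effective ones, and the inequality $r + r_G(\un k_G - \ud) \leq g-1$ still gives what we want even if $r_G(\un k_G - \ud) = -1$ (in that degenerate case Riemann--Roch forces $r - d + g - 1 = -1$, so $r = d - g$, and then $r \leq d/2$ is equivalent to $d \leq 2g$, which holds). Alternatively, one notes that $r \geq 1$ and $d \leq 2g-2$ together with Riemann--Roch already force $r_G(\un k_G - \ud)\ge 0$ in the nontrivial range, so one need not worry. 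The computation $r_G(\un k_G) = g-1$ and $r_G(\underline 0) = 0$ should be stated explicitly; the latter is immediate from the definition since the empty/zero effective divisor is dominated, while for any strictly effective degree-$0$ class $\underline 0$ is the unique effective representative and it dominates no nonzero effective divisor, so $r_G = 0$.

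\textbf{Main obstacle.} There is no real obstacle here: the proof is a two-line consequence of Riemann--Roch plus subadditivity, and the only thing requiring a modicum of care is bookkeeping with the $-1$ convention and confirming the boundary/degenerate cases ($r \leq 0$, and $r_G(\un k_G - \ud) = -1$) are handled. The substantive inputs — subadditivity and Riemann--Roch for graphs — are exactly the theorems quoted just above in the excerpt, so the argument is self-contained modulo those.
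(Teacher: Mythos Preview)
Your proposal is correct and follows exactly the approach indicated in the paper: the text immediately preceding the corollary states that subadditivity $r_G(\ud)+r_G(\ud')\le r_G(\ud+\ud')$ together with Riemann--Roch for graphs yields Clifford, and your argument is precisely the standard way to spell this out (apply subadditivity to $\ud$ and $\un k_G-\ud$, use $r_G(\un k_G)=g-1$, and add the resulting inequality to the Riemann--Roch identity). Your handling of the boundary cases is fine, though in fact one can simply add the two relations directly to get $2r_G(\ud)\le d$ without any case distinction.
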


The following basic properties of the rank function are further consequences of the Riemann-Roch Theorem for graphs.

\begin{cor}\cite{C1}\label{corrrfg}  Let $G$ be a graph of genus $g$ and let $\ud \in \Div^d(G)$. Then:
\begin{itemize}
\item[(a)] If $d=0$, then $r_G(\ud) \leq 0$, and equality holds if and only if $\ud \sim \un{0}$.
\item[(b)] If $d=2g-2$, then $r_G(\ud) \leq g-1$ and equality holds if and only if $\ud \sim \un{k}_G$.
\item[(c)] If $d<0$, then $r_G(\ud) = -1$.
\item[(d)] If $d> 2g-2$, then $r_G(\ud) = d-g$.
\end{itemize}
\end{cor}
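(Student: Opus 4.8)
The plan is to deduce all four statements from the Riemann--Roch theorem for graphs (Theorem~\ref{rrg}), the Clifford inequality (Corollary~\ref{cor:cliffBN}), and one elementary fact read off directly from the definition of $r_G$: for any divisor $\ud \in \Div(G)$ one has $r_G(\ud) \geq 0$ if and only if $\ud$ is linearly equivalent to an effective divisor. Indeed, taking $k = 0$ in the definition, $r_G(\ud)\geq 0$ means that for the unique $\underline{e}\in\Div_+^0(G)$, namely $\underline{e}=\underline{0}$, there is some $\ud'\sim\ud$ with $\ud'\geq 0$; for graphs with loops or weights this is applied on $G^\bullet$, where it is equivalent to the same statement on $G$ because $\iota$ carries $\underline{0}$ to $\underline{0}$ and effective divisors to effective divisors, and $\Pic(G)\hookrightarrow\Pic(G^\bullet)$. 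I would also use repeatedly that linear equivalence preserves degree and that the only effective divisor of degree $0$ is $\underline{0}$. I would prove the parts in the order (c), (d), (a), (b).

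For (c): if $d < 0$ there is no effective divisor of degree $d$, so $\ud$ is not equivalent to an effective divisor, whence $r_G(\ud) < 0$; since $r_G(\ud)\geq -1$ always, $r_G(\ud) = -1$. For (d): if $d > 2g-2$ then $\deg(\underline{k}_G - \ud) = 2g - 2 - d < 0$, so (c) gives $r_G(\underline{k}_G - \ud) = -1$, and Riemann--Roch yields $r_G(\ud) = (d - g + 1) + r_G(\underline{k}_G - \ud) = d - g$.

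For (a): if $d = 0$, Corollary~\ref{cor:cliffBN} gives $r_G(\ud)\leq 0$, so $r_G(\ud)\in\{-1,0\}$; and $r_G(\ud) = 0$ iff $r_G(\ud)\geq 0$ iff $\ud$ is equivalent to an effective divisor of degree $0$ iff $\ud\sim\underline{0}$ (in particular $r_G(\underline{0}) = 0$). For (b): if $d = 2g-2$ then $\deg(\underline{k}_G - \ud) = 0$, so by (a) we have $r_G(\underline{k}_G - \ud)\leq 0$, with equality precisely when $\underline{k}_G - \ud\sim\underline{0}$, i.e.\ when $\ud\sim\underline{k}_G$; Riemann--Roch then gives $r_G(\ud) = (g-1) + r_G(\underline{k}_G - \ud)\leq g - 1$, with equality exactly when $\ud\sim\underline{k}_G$. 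There is no serious obstacle here; the only point requiring any care is the reduction to the loopless, weightless case in (a) and (b), i.e.\ checking that ``equivalent to an effective divisor of degree $0$'' is correctly detected after passing to $G^\bullet$, which follows from the injectivity of $\Pic(G)\hookrightarrow\Pic(G^\bullet)$ together with the compatibility of $\iota$ with effectivity and with $\underline{0}$.
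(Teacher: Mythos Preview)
Your argument is correct and matches what the paper indicates: it states these facts as ``further consequences of the Riemann--Roch Theorem for graphs'' with a citation to \cite{C1} and gives no further details, and your derivation of (c) from the definition, (d) from (c) via Riemann--Roch, and (b) from (a) via Riemann--Roch is exactly the intended route. One small simplification: invoking Clifford for the bound $r_G(\ud)\le 0$ in (a) is unnecessary, since if $r_G(\ud)\ge 1$ then $\ud - v$ would be equivalent to an effective divisor for any vertex $v$, contradicting (c) because $\deg(\ud - v)=-1$; this avoids any worry about the logical dependence of Corollary~\ref{cor:cliffBN} on the elementary case $r_G(\underline{0})=0$.
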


\subsection{The algebraic rank}

Next, we consider a nodal curve $X$ and a line bundle $L$ on $X$. We can associate to the pair $(X,L)$ its combinatorial type, which is  the pair $(G_X, \underline{\deg}(L))$ where $G_X$ is the dual graph of $X$ and $\underline{\deg}(L)$ is the \textit{multidegree} of $L$.
Recall that $G_X$ is a weighted graph that has a vertex $v$ for each irreducible component $C_v$ of $X$, an edge for each node, and weight function $\omega$ given by associating to a vertex the geometric genus of the corresponding irreducible component of $X$. The multidegree $\underline{\deg}(L)$, on the other hand, is the divisor on $G_X$ whose value on a vertex is given by the degree of the restriction of $L$ to the corresponding irreducible component of $X$. 

We denote by $M_G$ the set of isomorphism classes of curves having $G$ as dual graph.
Given a divisor $\ud= (d_1, \ldots, d_{\lambda}) \in \ZZ^{\lambda}$, we set as usual
\[\Pic^{\ud}(X):= \{L\in \Pic (X) : \underline{\deg}(L)=\ud \},\]
the variety of isomorphism classes of line bundles of multidegree $\ud$. The varieties $\Pic^{\ud}(X)$ are the connected components of the degree $d$ Picard variety $\Pic^d(X)$. For every curve $X\in M_G$, we have $\underline{\deg} (K_X)=\underline{k}_{G_X}$, where $K_X$ denotes the dualizing bundle on $X$.



In \cite{C1}, Caporaso introduced a way to give an algebraic interpretation for the Baker-Norine rank of $\underline{\deg}(L)$ on the nodal curves $X$ with dual graph $G$ themselves, i.e., without choosing a smoothing of $X$ as in \cite{B}. More precisely, she defined the so-called \textit{algebraic rank} $\ralg$, as follows:
First, we set \[r^{\mathrm{max}}(X, \md) = \max \left \{r(X, L) \mid L \in \Pic^{\md}(X) \right \},\]
where $r(X,L) = h^0(X,L) - 1$ denotes the rank of the line bundle $L$.
Then 
\[r^{\min}(X,\delta):=\min \left\{\rmax(X,\ud) \mid \forall \ud\in \delta \right\},\]
and finally 
\[\ralg(G, \delta) := \max \left \{r^{\min}(X,\delta) \mid X \in M_G \right \}. \]

That is, we have:

\begin{equation}
\ralg(G,\delta):=\max_{X\in M_G } \left\{ \min_{\ud \in \delta} \left \{ \max_{L\in \Pic^{\ud}(X)} \left \{r(X,L) \right\} \right\}\right \}.\label{ralg2}
\end{equation}
In many examples, the algebraic rank coincides with the Baker-Norine rank but there are examples where the two differ \cite{CLM, Len17}. However, Caporaso, Len, and the third author were able to show in \cite{CLM} that the Baker-Norine rank is an upper bound for the algebraic rank of divisors on finite graphs. We can summarize what is known as follows:
\begin{fact}\label{fact:eqrgralg} Let $G$ be a finite graph of genus $g$, $\md$ a divisor with class $\delta$ of degree $d$, and $X \in M_G$ a curve with dual graph $G$. 
\begin{itemize}
    \item[(1)] \cite[Proposition 2.6]{CLM}  Riemann-Roch holds for $\rmax, r^{\min},$ and $\ralg$:
    \begin{itemize}
		\item[(a)] $\rmax (X, \ud) - \rmax (X, \underline{k}_G - \ud )= d - g +1;$
		\item[(b)] $r^{\min} (X , \delta) - r^{\min}  (X , [\underline{k}_G - \ud])= d - g +1;$
		\item[(c)] $\ralg (G, \delta) - \ralg (G,  [\underline{k}_G - \ud])= d - g +1.$
    \end{itemize}
    \item[(2)] \cite[Theorem 4.2]{CLM} \label{allcases}
	 The algebraic rank is bounded by the Baker-Norine rank:
	\begin{equation}\label{des}
	\ralg(G, \delta)\leq r_G(\delta).
	\end{equation}
    \item[(3)]\cite[Proposition 4.6]{CLM} If $0\leq d \leq 2g-2$, then the Clifford inequality holds: $$\displaystyle \ralg (G,\delta) \leq \frac{d}{2}.$$ 
    \item[(4)]\cite[Theorem $2.9$]{C1}\label{teo29} If $d\geq 2g -2$, $G$ is semistable and $\ud$ is semibalanced, then $\rmax(X,\ud) = r_G(\ud)$.
    \item[(5)] Equality $\ralg(G,\delta) = r_G(\delta)$ has been established in the following cases:
    \begin{itemize}
        \item [(a)] \cite[Theorem 2.9]{C1} $d \geq 2g - 2$ or $d \leq 0$.
        \item [(b)] \cite[Theorem 1.2]{KY16} $g \leq 3$ and $G$ is not hyperelliptic. 
        \item [(c)] \cite[Theorem 1.1]{KY16} $G$ is hyperelliptic and the base field does not have characteristic $2$. 
        \item[(d)] \cite[Corollary 2.11]{C1} $G$ has only one vertex.
        \item [(e)] \cite[Proposition 5.6]{CLM} $G$ is weightless and loopless with two vertices (a binary graph). 
        \item [(f)] \cite[Theorem 5.13]{CLM} $G$ is weightless and loopless and $\delta$ is rank-explicit, i.e., $\ud$ is $u$-reduced for some vertex $u$ and $\ud(u)$ is either negative or minimal among the values $\ud(v), v \in V(G)$. 
    \end{itemize}
\end{itemize}
\end{fact}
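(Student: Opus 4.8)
The statement assembles results from several sources; my plan is to separate the part that is pure bookkeeping (part~(1)), the one genuinely hard inequality (part~(2)), the formal corollary (part~(3)), and the equality cases (4)--(5), which I would simply import from the cited references.

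For part~(1) I would bootstrap from Riemann--Roch and Serre duality for the dualizing sheaf on the nodal curve $X$. Fixing $L\in\Pic^{\ud}(X)$, these give $r(X,L)-r(X,K_X\otimes L^{-1})=\deg L-g+1=d-g+1$; since $L\mapsto K_X\otimes L^{-1}$ is a bijection $\Pic^{\ud}(X)\to\Pic^{\underline{k}_G-\ud}(X)$ (recall $\underline{\deg}(K_X)=\underline{k}_G$) and the right-hand side is independent of $L$, taking the maximum over $L$ gives (a). Then, since linear equivalence is translation invariant, $\{\underline{k}_G-\ud:\ud\in\delta\}$ is again a single class, so minimizing (a) over $\ud\in\delta$ gives (b); finally maximizing (b) over $X\in M_G$ gives (c). I expect no obstruction here.

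Part~(2), the inequality $\ralg(G,\delta)\le r_G(\delta)$, is the crux and the only part I expect to be hard. Since $\ralg(G,\delta)=\max_{X\in M_G}\rmin(X,\delta)$ and $\rmin(X,\delta)=\min_{\ud\in\delta}\rmax(X,\ud)$, it suffices to produce, for each curve $X$ with dual graph $G$, a single $\ud\in\delta$ with $\rmax(X,\ud)\le r_G(\delta)$; the natural candidate is a $u$-reduced representative, possibly adjusted by a Dhar-type burning procedure on $G$. One then has to bound $h^0(X,L)$ for every $L\in\Pic^{\ud}(X)$, and the mechanism I would try is a combinatorial control of sections: associate to a collection of linearly independent global sections of $L$ a corresponding collection of effective divisors in the class $\delta$ on $G$, and then conclude from the definition of $r_G$. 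The delicate points --- choosing $\ud$ so that the combinatorics of $G$ genuinely governs the count, and handling sections that vanish identically on whole components --- are the main obstacle, and I would expect to need an induction on $G$ (on edges or on the number of vertices) together with the properties of reduced divisors recorded in \S\ref{sec:semibalanced}; this is essentially the content of \cite{CLM}.

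Given (2), part~(3) is immediate: for $0\le d\le 2g-2$ one gets $\ralg(G,\delta)\le r_G(\delta)\le d/2$ from the Clifford inequality for the Baker--Norine rank (Corollary~\ref{cor:cliffBN}). For (4) I would invoke Caporaso's analysis in \cite{C1}: when $\ud$ is semibalanced of degree $d\ge 2g-2$ on a semistable $G$, the structure of line bundles balanced with respect to the canonical polarization lets one compute $\rmax(X,\ud)$ directly and identify it with $r_G(\ud)$. The cases in (5) then follow by assembly: (5)(a) is trivial for $d<0$ (both sides equal $-1$, using (2)), holds for $d\ge 2g-2$ by \cite{C1} (via (4)), and descends to $d\le 0$ by applying the graph Riemann--Roch theorem (Theorem~\ref{rrg}) together with part~(1)(c) to $\underline{k}_G-\ud$, which then has degree $\ge 2g-2$; the remaining cases (5)(b)--(f) are the case analyses of \cite{KY16} and \cite{CLM}, which I would cite rather than reprove.
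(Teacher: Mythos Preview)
The paper does not prove this statement at all: it is presented as a \emph{Fact} collecting results from the literature, each item carrying its own citation to \cite{CLM}, \cite{C1}, or \cite{KY16}, with no argument given. So there is no ``paper's own proof'' to compare against; the paper's approach is simply to cite.

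That said, your sketch is accurate as a summary of how the cited proofs actually go. Your argument for part~(1) is exactly the one in \cite[Proposition~2.6]{CLM}: Riemann--Roch on $X$ plus the bijection $L\mapsto K_X\otimes L^{-1}$ gives (a), and then (b) and (c) follow by taking min and max in turn. Your reduction of (3) to (2) plus Corollary~\ref{cor:cliffBN} is also the argument of \cite{CLM}. For part~(2) your outline is in the right spirit but somewhat vague on the key mechanism: the actual proof in \cite{CLM} does not quite ``associate to independent sections a collection of effective divisors on $G$''; rather, it fixes an effective $\underline{e}$ of degree $s=\ralg(G,\delta)$, uses the existence of $L$ with $r(X,L)\ge s$ to force nonvanishing of sections after subtracting points realizing $\underline{e}^{\deg}$, and then uses $u$-reducedness of $\ud-\underline{e}^{\deg}$ together with an induction along the Dhar filtration to show $\ud-\underline{e}^{\deg}\ge 0$, invoking Lemma~\ref{rdgeqr}. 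Your plan to ``choose a $u$-reduced representative and bound $h^0$'' is close but has the quantifiers slightly off; the reduced divisor is $\ud-\underline{e}^{\deg}$, not $\ud$ itself. For (4) and (5) you correctly defer to the references, which is all the paper does as well.
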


\section{The uniform algebraic rank}
\label{sec:uniform algebraic rank}

In this section, we introduce a new version of the algebraic rank, in which we first vary the curve in $M_G$ and only then vary $\md \in \delta$. Namely,
given a graph $G$ and a divisor $\ud\in \Div(G)$, we define
$$\rmaxx(G, \ud):=\max\{\rmax(X,\ud) \mid X\in M_G\},$$
and for any $\delta\in \Pic^d(G)$, we define the \emph{uniform algebraic rank}:
\begin{equation}\label{rALG}
\ralgg(G,\delta):=\min \left \{\rmaxx(G, \ud) \mid \ud \in \delta \right  \}.
\end{equation}

That is, we have 
\begin{equation} \label{eq:uniform rank}
\ralgg(G,\delta):=\min_{\ud \in \delta} \left \{\max_{X\in M_G} \left \{ \max_{ L \in \Pic^{\ud}(X)} \left \{ r(X,L) \right \} \right \} \right \}.
\end{equation}

\begin{rmk}
    Compared to the definition of $\ralg$, the order of varying $X \in M_G$ and $\md \in \delta$ is switched in the definition of $\ralgg$ (see \eqref{ralg2} vs. \eqref{eq:uniform rank}).
    On the other hand, this is the only possible variation of the definition of algebraic rank given by switching the order in which the objects are varied: in order to define $L$ it is necessary to first fix both $X$ and $\md$. 
\end{rmk}

In this section, we will compare this notion of uniform algebraic rank to related definitions.

\subsection{Comparison to the algebraic rank}

We begin our discussion by observing that the uniform algebraic rank is an upper bound for the algebraic rank:

\begin{prop}\label{prop:ineqralgalgg} Let $G$ be a graph and let $\delta\in \Pic(G)$, then
\[ \ralg(G,\delta) \leq \ralgg(G,\delta). \]
\end{prop}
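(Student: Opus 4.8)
The plan is to compare the two min-max-max expressions directly by exploiting the fact that a min over one set followed by a max over another is always bounded above by the max-then-min only in one direction, and here the structure actually favors $\ralgg$. Concretely, write
\[
\ralg(G,\delta) = \max_{X \in M_G} r^{\min}(X,\delta), \qquad
\ralgg(G,\delta) = \min_{\ud \in \delta} \rmaxx(G,\ud),
\]
and recall $r^{\min}(X,\delta) = \min_{\ud \in \delta} \rmax(X,\ud)$ while $\rmaxx(G,\ud) = \max_{X \in M_G} \rmax(X,\ud)$.

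First I would fix an arbitrary curve $X_0 \in M_G$ and an arbitrary divisor $\ud_0 \in \delta$, and observe the trivial pointwise inequality $\rmax(X_0, \ud_0) \leq \rmaxx(G, \ud_0)$, which holds because $X_0$ is one of the curves over which the maximum defining $\rmaxx$ is taken. Since this holds for the particular $\ud_0$, and since $r^{\min}(X_0,\delta) = \min_{\ud \in \delta}\rmax(X_0,\ud) \leq \rmax(X_0,\ud_0)$, I would choose $\ud_0$ to be a divisor realizing $\ralgg(G,\delta)$, i.e. with $\rmaxx(G,\ud_0) = \ralgg(G,\delta)$ (such a representative exists since the minimum in \eqref{rALG} is attained — for instance one may restrict to reduced representatives with respect to a fixed vertex, or simply note the values are integers bounded below by $-1$ and bounded above, or invoke that only finitely many classes of effective divisors need be considered as in Theorem D, though for this proposition the mere existence of a minimizer suffices). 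Chaining the inequalities gives $r^{\min}(X_0,\delta) \leq \rmax(X_0,\ud_0) \leq \rmaxx(G,\ud_0) = \ralgg(G,\delta)$.

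Finally, since $X_0 \in M_G$ was arbitrary and the right-hand side $\ralgg(G,\delta)$ does not depend on $X_0$, I would take the maximum over all $X_0 \in M_G$ on the left to conclude $\ralg(G,\delta) = \max_{X_0 \in M_G} r^{\min}(X_0,\delta) \leq \ralgg(G,\delta)$, as desired.

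I do not expect any serious obstacle here: this is essentially the elementary observation that for a function $f(X,\ud)$ one always has $\max_X \min_{\ud} f \leq \min_{\ud} \max_X f$. The only point requiring a word of care is the assertion that the relevant extrema are attained (so that "realizing" divisors and curves exist), but this is immediate since $M_G$ contributes finitely many relevant combinatorial configurations for the rank and the rank takes integer values, and in any case the argument can be phrased purely with infima/suprema without ever choosing an optimizer, replacing "choose $\ud_0$ realizing the minimum" with "let $\epsilon > 0$ and pick $\ud_0$ with $\rmaxx(G,\ud_0) < \ralgg(G,\delta) + \epsilon$" and letting $\epsilon \to 0$.
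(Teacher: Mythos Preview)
Your proof is correct and takes essentially the same approach as the paper: both arguments instantiate the elementary inequality $\max_X \min_{\ud} f \leq \min_{\ud} \max_X f$ by choosing a representative $\ud_0$ realizing $\ralgg$ and comparing through $\rmax(X,\ud_0)$. The only cosmetic difference is that the paper also fixes a realizer $(X_1,\ud_1)$ for $\ralg$ and chains two inequalities, whereas you keep $X_0$ arbitrary and take the maximum at the end; the paper's subsequent Remark makes the same logical observation you do.
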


\begin{proof}  Suppose that $\ralg(G,\delta)$ is realized by $(X_1,\ud_1)$ , i.e.,
\[\ralg(G,\delta) =r^{\min}(X_1,\delta) = \rmax(X_1,\ud_1)\]
 and suppose also that $\ralgg(G,\delta)$ is realized by $(X_2,\ud_2)$, i.e.,
 \[\ralgg(G,\delta) = \rmaxx(G,\ud_2) = \rmax(X_2,\ud_2),\]
where $X_1, X_2\in M_G$ and $\ud_1,\ud_2 \in [\ud]$. So, since $\ralg(G,\delta) = \rmax(X_1,\ud_1)$, we have $ \rmax(X_1, \ud_1) \leq \rmax(X_1, \ud')$,  for all $\ud' \in [\ud]$. In particular,
\begin{equation}\label{rmrm}
\rmax(X_1, \ud_1) \leq \rmax(X_1, \ud_2).
\end{equation}
Since $\ralgg(G,\ud) = \rmax(X_2,\ud_2)$ we have $\rmax(X_2,\ud_2) \geq \rmax(Y, \ud_2)$, for all \linebreak $Y\in M_G$. In particular,
\begin{equation}\label{rmrmm}
\rmax(X_2, \ud_2) \geq \rmax(X_1,\ud_2 ).
\end{equation}
By (\ref{rmrm}) and (\ref{rmrmm}) we have
\[  \rmax(X_1, \ud_1) \leq \rmax(X_1, \ud_2) \leq \rmax(X_2, \ud_2) .\]
Therefore,
\[ \ralg(G,\ud) \leq \ralgg(G,\ud).\]
\end{proof}

\begin{rmk} \label{remdefunialgr}
    Another way to see that the above mentioned inequality holds is as follows. Fix $s \in \mathbb Z$. Then:
    \begin{enumerate}
        \item We have $\ralg(G, \delta) \leq s$ if and only if $\forall X \in M_G, \exists \md \in \delta, \forall L \in \Pic^{\ud}(X): r(X,L) \leq s$. 
        \item We have $\ralgg(G, \delta) \leq s$ if and only if $\exists \md \in \delta, \forall X \in M_G, \forall L \in \Pic^{\ud}(X): r(X,L) \leq s$. 
    \end{enumerate}
    Clearly, the second statement implies the first, that is, if $\ralgg(G, \delta) \leq s$ then also $\ralg(G, \delta) \leq s$. This implies the inequality of Proposition~\ref{prop:ineqralgalgg}.
\end{rmk}

The next example shows that the uniform algebraic rank can be larger than the algebraic rank:

\begin{ex} \label{ex:strict ineq}
    Let $G$ have three vertices $v_1, v_2, v_3$, with three edges between $v_1$ and $v_2$, and one edge between $v_2$ and $v_3$ as in Figure~\ref{fig3}. The weights of the vertices are $0, 3, 1$, respectively. 

    \tikzset{every picture/.style={line width=0.75pt}}
    \begin{figure}[ht]	
    \begin{tikzpicture}[x=0.6pt,y=0.6pt,yscale=-0.8,xscale=0.8]
    \import{./}{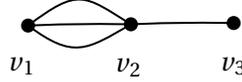}
    \end{tikzpicture}		
    \caption{The graph in Example~\ref{ex:strict ineq}.}
    \label{fig3}
    \end{figure}
    
    Denote by $C_i$ the irreducible component corresponding to $v_i$ of a curve $X$ with dual graph $G$. In particular, $C_2$ is a smooth curve of genus $3$, so it is either trigonal or hyperelliptic, but not both (here we mean by trigonal that it admits a \emph{base point free} $g_3^1$). 

    Set $\md = (0, 3, 2)$ and $\md' = (3,2,0)$ and observe that $\md\sim\md'$. We determine $r^{\max}$ for these two divisors, depending on whether $C_2$ is hyperelliptic or not. Throughout we use the results of \cite[Lemmas 1.4 and 1.5]{caporasosemistable} describing how gluing along nodes affects the dimension of the space of global sections. 
 
    Consider first a line bundle $L$ of multidegree $\md = (0, 3, 2)$. Let $X'$ be the partial normalization of $X$ with dual graph given by deleting two edges between $v_1$ and $v_2$. Since $L|_{C_1}$ and $L|_{C_3}$ are base point free, we have for the pull-back $L'$ of $L$ to $X'$ that $h^0(X', L') = h^0(C_2, L|_{C_2}) + 1$. Thus if $h^0(C_2, L|_{C_2}) \leq 1$ we have $h^0(X,L) \leq 2$. So assume $h^0(C_2, L|_{C_2}) = 2$, which is the maximal possible value by Clifford's inequality. 

    \begin{enumerate}
        \item If $C_2$ is hyperelliptic, then $L|_{C_2}$ equals a copy of the $g_2^1$ plus a base point $r$. If $r \in C_1 \cap C_2$, all global sections of $L$ need to vanish along $C_1$. If $r \not \in C_1 \cap C_2$, any global section of $L$ that vanishes along $C_1$ needs to vanish also along $C_2$. Both statements are not true for $L'$ and thus in either case not all sections of $L'$ descend to $L$. Hence $h^0(X,L) \leq 2$ and $r^{\max}(X, \md) =  1$ if $C_2$ is hyperelliptic. 
        \item If $C_2$ is trigonal, it is not hyperelliptic and thus $L|_{C_2}$ is a base point free $g_3^1$. If $C_1$ and $C_2$ are glued on $C_2$ along a divisor in this $g_3^1$, all global sections of $L'$ descend to global sections of $L$ for an appropriate choice of gluing over the nodes normalized in $X'$. For such a choice we will have $h^0(X,L) = 3$ and $r^{\max}(X, \md) =  2$. 
    \end{enumerate}

    Next, let us consider the case of a line bundle $L$ of multidegree $\md' = (3,2,0)$. A similar calculation as for $\md$ yields the following: we may assume that $L|_{C_3} = \cO_{C_3}$. With notation as in the case for $\md$ above, we get $h^0(X',L') = 3 + h^0(C_2, L|_{C_2})$. Since subtracting any two points from $L|_{C_1} \simeq \cO_{\PP^1}(3)$ gives a base point free linear system, we need to have that $h^0(X,L) = 1 + h^0(C_2, L|_{C_2})$. It follows that $r^{\max}(X, \md') =  2$ if $C_2$ is hyperelliptic and $r^{\max}(X, \md') =  1$ if $C_2$ is trigonal.

    In summary, we obtain:
    
    \begin{center}
    \begin{tabular}{| c || c | c | } \hline 
    \diagbox[
    innerwidth=2.2cm,innerleftsep=0.8cm]{$\delta$}{$M_G$} 
     & $X$ with $C_2$ hyperelliptic & $X^*$ with $C_2$ trigonal\\
  \hline \hline
 $\ud = (0,3,2)$                      & $\rmax(X,\ud)=1$ & $\rmax(X^*,\ud)=2$ \\
 \hline
 $\ud' = (3,2,0)$                      & $\rmax(X,\ud')=2$ & $\rmax(X^*,\ud')=1$ \\
 \hline
\end{tabular}
\end{center}

Thus $r^{\min}(X,\delta) \leq 1$ and $r^{\min}(X^*,\delta) \leq 1$. Since any curve with dual graph $G$ is either of the form $X$ or $X^*$ (that is, has either $C_2$ hyperelliptic or trigonal), it follows that  \[\ralg(G,\delta) \leq 1.\] 

On the other hand, $\rmaxx(G, \md) = \rmaxx(G, \md') = 2.$ To show that we indeed have \[\ralgg(G,\delta) \geq 2, \]
it suffices to produce for every $\md'' \in \delta$ a curve $X$ with dual graph $G$ and a line bundle $L$ on $X$ of multidegree $\md''$ and rank at least $2$. By Theorem~\ref{thm:effective realization} below, it suffices to do so for all effective $\md'' \in \delta$. We already checked this for $\md$ and $\md'$, and we check the remaining cases below. Whenever the multidegree has value $0$ on $C_3$ we set $L|_{C_3} = \mathcal O_{C_3}$. 

\begin{enumerate}[(a)]
    \item If $\md'' = (0,5,0)$ the restriction $L|_{C_2}$ has degree $5$ on a genus $3$ curve, hence $h^0(C_2, L|_{C_2}) = 3$. Let $C_2$ be hyperelliptic and  $L|_{C_2} = g_2^1 + p + q + r$ with no two points in $p,q,r$ conjugate under the hyperelliptic involution and such that $C_1 \cap C_2 = \{p,q,r\}$. This implies that subtracting one of the points $p,q,r$ from $L|_{C_2}$ turns the other two into base points. With notation as above, we have $h^0(X', L') = 3$ and by the choice of $L|_{C_2}$ there is a choice of gluing data along $C_1 \cap C_2$ such that all global sections of $L'$ descend to $L$ and hence $h^0(X,L) = 3$, as well.
    \item If $\md'' = (0,4,1)$, choose $C_2$ trigonal, $L|_{C_2} = g_3^1 + r$ where $r = C_2 \cap C_3$, and $L|_{C_3} = \cO_{C_3}(r)$. Then $h^0(X', L') = 3$. If the points in $C_1 \cap C_2$ are chosen so that they form the $g_3^1$ on $C_2$, there is a gluing along the nodes in $C_1 \cap C_2$ such that all global sections of $L'$ descend to $L$. For this choice, we obtain $h^0(X,L) = 3$.
    \item If $\md'' = (0,2,3)$, let $C_2$ be hyperelliptic, $L|_{C_2} = g_2^1$, and $C_1 \cap C_2$ contain two points that are conjugate under the hyperelliptic involution on $C_2$. Then $h^0(X', L') = 4$ and there is gluing data over the nodes of $C_1 \cap C_2$ that imposes only one condition on global sections in passing from $L'$ to $L$. For this choice we obtain  $h^0(X,L) = 3$. 
    \item If $\md'' = (0,1,4)$ or $\md'' = (0,0,5)$, $h^0(C_3, L|_{C_3}) \geq 4$ and hence the space of global sections of $L$ vanishing along $C_1 \cup C_2$ already has dimension at least $3$, and hence so does $h^0(X,L)$.
    \item If $\md'' = (3,1,1)$, set $r = C_2 \cap C_3$ and $L|_{C_2} = \cO_{C_2}(r)$ as well as $L|_{C_3} = \cO_{C_3}(r)$. Then $h^0(X', L') = 5$ and gluing along the two normalized nodes in $C_1 \cap C_2$ can impose at most two conditions. Hence $h^0(X,L) \geq 3$. 
    \item Finally, if $\md'' = (3,0,2)$ then $h^0(X', L') = 5$ and gluing along the two normalized nodes in $C_1 \cap C_2$ can impose at most two conditions. Hence again $h^0(X,L) \geq 3$.
\end{enumerate}
\end{ex}

\begin{rmk}
    One case in which we do have $\ralgg(G,\delta) = \ralg(G, \delta)$ is if $G$ is weightless and each vertex has valence at most $3$. In this case there is, up to isomorphism, a unique curve $X$ with dual graph $G$. 
\end{rmk}

\subsection{Comparison to the Baker-Norine rank}
The maybe most consequential property of the algebraic rank $\ralg$ is that it is bounded from above by the Baker-Norine rank  \cite{CLM}. In this section, we show that the same holds for the uniform algebraic rank $\ralgg$.

Before we can prove this comparison result, we need some preliminaries.
For each $v\in V(G)$ set
\[g(v):=\omega(v)+l(v),\]
where $l (v)$ is the number of loops adjacent to $v$.
\begin{defn}\label{edegdef} Let $G$ be a graph and let $\underline{e}$ be an effective divisor of $G$. We define the effective divisor $\underline{e}^{\deg}$ on $G$ so that for every $v\in V$ we have
\[ \underline{e}^{\deg} (v)= \underline{e}(v) + \min \{ \underline{e}(v), g(v)\}. \]
\end{defn}
In particular, if $G$ is a weightless and loopless graph then $\underline{e}^{\deg} = \underline{e}$. 

\begin{lma}\cite[Lemma 3.3]{CLM}\label{rdgeqr} Let $G$ be a graph and let $\ud\in \Div(G)$. If for every effective divisor $\un{e}$ of degree $s$ the divisor $\ud - \un{e}^{\deg}$ is equivalent to an effective divisor, then $r_G(\ud) \geq s$.
\end{lma}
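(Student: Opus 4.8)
The statement to prove is: if for every effective $\un{e} \in \Div_+^s(G)$ the divisor $\ud - \un{e}^{\deg}$ is linearly equivalent to an effective divisor on $G$, then $r_G(\ud) \geq s$. Since $r_G(\ud) = r_{G^\bullet}(\iota(\ud))$ by definition, the plan is to verify the defining condition of the Baker--Norine rank on the weightless, loopless graph $G^\bullet$: namely, that for every $\un{f} \in \Div_+^s(G^\bullet)$ there is a divisor equivalent to $\iota(\ud)$ that dominates $\un{f}$. So I would start with an arbitrary effective $\un{f}$ of degree $s$ on $G^\bullet$ and try to reduce the problem to an effective divisor of the special form $\iota(\un{e}^{\deg})$ coming from $G$, to which the hypothesis applies.

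The key combinatorial step is to understand the structure of $G^\bullet$: each vertex $v$ of $G$ with $g(v) = \omega(v) + l(v)$ gets, after the loop-and-subdivision construction, exactly $g(v)$ ``exceptional'' edges attached to $v$, each carrying a new midpoint vertex of valence $2$. The plan is: given $\un{f}$ on $G^\bullet$, first move (via chip-firing on $G^\bullet$) all chips sitting on the new midpoint vertices onto the original vertices of $G$. Firing a midpoint vertex $w$ on the loop at $v$ sends its chips along both half-edges back toward $v$; more carefully, I would argue that any effective divisor on $G^\bullet$ is equivalent to one supported on $V(G)$, and in fact that one can arrange the resulting divisor at each $v \in V(G)$ to be $\le \un{e}^{\deg}(v)$ for a suitable $\un{e} \in \Div_+^s(G)$ — this is exactly where the ``$+\min\{\un{e}(v), g(v)\}$'' correction term enters: a chip on a midpoint can be pushed to $v$, and conversely $\un{e}^{\deg}$ accounts for the extra degree that $\un{e}(v)$ chips at $v$ can ``absorb'' through the $g(v)$ exceptional edges. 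I expect the precise bookkeeping here — showing that an arbitrary effective $\un{f}$ of degree $s$ on $G^\bullet$ is dominated, after a suitable linear equivalence, by $\iota(\un{e}^{\deg})$ for some effective $\un{e}$ of degree at most $s$ on $G$ — to be the main obstacle, since one has to control simultaneously how many chips land on each $v$ and how the exceptional edges interact.

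Once that reduction is in place, the hypothesis gives an effective $\un{e}' \sim \ud - \un{e}^{\deg}$ on $G$; then $\iota(\un{e}') + \iota(\un{e}^{\deg}) \sim \iota(\ud)$ in $\Pic(G^\bullet)$ (since $\iota$ induces a homomorphism on Picard groups), and this divisor is effective and dominates $\iota(\un{e}^{\deg})$, hence dominates $\un{f}$ after translating back by the linear equivalence used in the reduction step. This exhibits, for the given $\un{f}$, a divisor equivalent to $\iota(\ud)$ that is $\ge \un{f}$, and since $\un{f}$ was arbitrary of degree $s$ we conclude $r_{G^\bullet}(\iota(\ud)) \ge s$, i.e. $r_G(\ud) \ge s$. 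A clean way to organize the middle step is to prove a self-contained sublemma: \emph{for every $\un{f} \in \Div_+^s(G^\bullet)$ there exist $\un{e} \in \Div_+^{s}(G)$ and a principal divisor $\pi$ on $G^\bullet$ with $\iota(\un{e}^{\deg}) \ge \un{f} + \pi$}; granting this, the rest is a two-line formal manipulation with linear equivalence, and I would present it in that order.
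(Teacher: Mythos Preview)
The paper does not prove this lemma: it is quoted verbatim from \cite[Lemma 3.3]{CLM} and used as a black box in the proof of Theorem~\ref{thm:mainineq}. So there is no ``paper's own proof'' to compare against, and your outline is the natural reconstruction of the argument one would expect in \cite{CLM}.

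Your overall strategy is correct and the final sublemma you isolate is true. One caution: your intermediate heuristic ``any effective divisor on $G^\bullet$ is equivalent to one supported on $V(G)$'' is false as stated. A midpoint vertex $w$ is joined to its base vertex $v$ by two edges and to nothing else, so every principal divisor on $G^\bullet$ has \emph{even} value at $w$; hence the parity of $\un f(w)$ is a linear-equivalence invariant, and an $\un f$ with $\un f(w)$ odd can never be moved entirely off $w$. Fortunately your actual sublemma does not need this: it only asks that $\iota(\un e^{\deg})$ be moved to something dominating $\un f$, not that $\un f$ be moved onto $V(G)$. Concretely, take $\un e(v) = \un f(v) + \sum_{w} \un f(w)$ (sum over midpoints $w$ attached to $v$), so $|\un e| = s$; then anti-fire each midpoint $w$ exactly $\lceil \un f(w)/2\rceil$ times. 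This places $2\lceil \un f(w)/2\rceil \ge \un f(w)$ at $w$ and removes from $v$ a total of $\sum_w \un f(w) + |\{w: \un f(w)\text{ odd}\}|$ chips; since the number of midpoints with $\un f(w)$ odd is at most $\min\{\un e(v), g(v)\}$, what remains at $v$ is still at least $\un f(v)$. This is exactly the ``$+\min\{\un e(v),g(v)\}$'' bookkeeping you anticipated, and with it your formal conclusion goes through as written.
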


\begin{thm}\label{thm:mainineq}
    For any graph $G$ and divisor class $\delta$ on $G$ we have \[r^{\mathrm{ALG}}(G,\delta) \leq r_G(\delta).\]  
\end{thm}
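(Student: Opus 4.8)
The plan is to exhibit a single representative $\ud\in\delta$ for which $\rmaxx(G,\ud)\le r_G(\delta)$; since $\ralgg(G,\delta)=\min_{\ud\in\delta}\rmaxx(G,\ud)$ by \eqref{rALG}, this immediately gives the theorem. Thus the entire difficulty is to produce one multidegree that is simultaneously good for \emph{every} curve $X\in M_G$, which is precisely the feature separating $\ralgg$ from $\ralg$ (where the good multidegree is allowed to depend on $X$, as in the CLM proof that $\ralg\le r_G$).

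I would fix a candidate $\ud$ and an arbitrary pair $(X,L)$ with $X\in M_G$ and $L\in\Pic^{\ud}(X)$, and set $s:=r(X,L)$. The aim is to show $r_G(\ud)\ge s$; since $r_G(\ud)=r_G(\delta)$ is constant on $\delta$, establishing this for every such pair yields $\rmaxx(G,\ud)\le r_G(\delta)$. To obtain the lower bound on $r_G(\ud)$ I would invoke Lemma~\ref{rdgeqr}: it suffices to check that for every effective divisor $\me$ of degree $s$ the divisor $\ud-\me^{\deg}$ is linearly equivalent to an effective divisor on $G$.

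The engine for this verification is a dimension count on $X$. Since $h^0(X,L)=s+1$, imposing vanishing along a carefully chosen effective divisor $E\subset X$ of multidegree $\me^{\deg}$ leaves a nonzero section $\sigma$, provided $E$ imposes at most $s=\deg\me$ conditions. The role of the correction $\me^{\deg}(v)=\me(v)+\min\{\me(v),g(v)\}$ is exactly this: on a component $C_v$ of geometric genus $g(v)$ one can place $\me(v)+\min\{\me(v),g(v)\}$ points in special (Riemann--Roch/Clifford) position so that they impose only $\me(v)$ conditions, and summing over $v$ gives the required total bound $s$. The resulting $\sigma$ then satisfies $\div(\sigma)\ge E$, so $\div(\sigma)-E\ge 0$ is an effective divisor of multidegree $\ud-\me^{\deg}$, which is what Lemma~\ref{rdgeqr} requires.

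The step I expect to be the main obstacle is making this construction legitimate and, crucially, uniform in $X$. Two things must be controlled at once. First, $\sigma$ must not vanish identically on any component, since otherwise $\div(\sigma)$ fails to have multidegree $\ud$ and the conclusion breaks down; this is exactly the mechanism by which the naive bound fails for a poorly chosen multidegree, and it forces a judicious choice of the representative $\ud$ (for instance passing to a reduced representative and organizing the section-by-vanishing argument component-by-component along a suitable orientation of $G$). Second, the genus-saving placement of $E$ and the dimension count must depend only on the combinatorial data $(\me, g(v))$ and not on the particular $X$ or $L$; verifying this independence is precisely what upgrades the bound $\ralg\le r_G$ to the uniform statement $\ralgg\le r_G$, and it is the technical heart of the argument.
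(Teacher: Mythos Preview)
Your plan has a genuine structural gap. You commit to a single $\ud\in\delta$ at the outset and then, for each pair $(X,L)$ and each effective $\me$ of degree $s=r(X,L)$, try to produce a section $\sigma\in H^0(X,L)$ vanishing along a divisor $E$ of multidegree $\me^{\deg}$ but not identically on any component. You correctly flag the obstruction---$\sigma$ may be forced to vanish on a component---but your proposed remedies (``pass to a reduced representative'', ``organize along a suitable orientation'') do not resolve it: the reducedness that makes the CLM machinery run is that of $\ud-\me^{\deg}$, and this depends on $\me$, so no single $\ud$ is adapted to every $\me$ simultaneously. The companion claim, that on each $C_v$ one can always place $\me^{\deg}(v)$ points imposing only $\me(v)$ conditions on $H^0(X,L)$, is also not a statement about arbitrary $(X,L)$---it depends on the linear series $|L|_{C_v}|$, not merely on $(\me,g(v))$---so the asserted uniformity in $(X,L)$ is unproven.

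The paper's argument sidesteps all of this by reversing the quantifiers. It sets $s=\ralgg(G,\delta)$ and uses that $\ralgg$ is a \emph{minimum} over representatives: hence \emph{every} $\ud\in\delta$ satisfies $\rmaxx(G,\ud)\ge s$. For each fixed $\me$ one is therefore free to pick the representative $\ud=\ud_{\me}$ for which $\ud_{\me}-\me^{\deg}$ is $u$-reduced; the bound $\rmaxx(G,\ud_{\me})\ge s$ then guarantees the existence of \emph{some} $(X,L)$ (allowed to depend on $\me$) with $r(X,L)\ge s$, and a single such pair is all that the proof of \cite[Theorem~4.2]{CLM} needs to conclude $(\ud_{\me}-\me^{\deg})(u)\ge 0$. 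No uniformity in $X$ is required; indeed your reading of what separates $\ralgg$ from $\ralg$ is inverted. (Relatedly, in the CLM proof of $\ralg\le r_G$ the representative depends on $\me$, not on $X$: the curve $X$ is fixed from the start since the outer operation in $\ralg$ is a max over $X$.)
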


\begin{proof}
Suppose that $\ralgg(G,\delta) =s$, we want to prove that $r_G(\delta) \geq s$. Since $r_G(\ud)\geq -1$, we can assume $s\geq 0$.
By Lemma~\ref{rdgeqr} it is enough to prove that for all $\underline{e}\in \Div^s_{+}(G)$, there exists $\ud \in \delta$ such that $\ud - \underline{e}^{\deg} \geq 0$. 

By Proposition~\ref{exiured}, there exists $\ud\in \delta$ such that $\ud - \un{e}^{\deg}$ is $u$-reduced for $u\in V(G)$. 
By definition, this implies that $\ud - \un{e}^{\deg}$ is effective away from $u$ and it remains to show that $(\ud - \un{e}^{\deg})(u)\geq 0$. 

Observe that if $\ralgg(G, \delta)=s$, then for all $\ud'\in \delta$, $\rmaxx(G, \ud')\geq s$. In particular, if we consider the representative $\ud$ of $\delta$ such that $\ud - \un{e}^{\deg}$ is $u$-reduced as above, then there exist a curve $X\in M_G$ and a line bundle $L\in \Pic^{\ud}(X)$ such that $r(X,L)\geq s$. By the proof of \cite[Theorem $4.2$]{CLM}, the existence of such a line bundle is enough to conclude that $(\ud - \un{e}^{\deg})(u)\geq 0$, and therefore $r_G(\ud)\geq s$.
\end{proof}

\begin{rmk} \label{rmk:strict ineq}
    Examples where the inequality $\ralg (G,\delta) \leq r_G(\delta)$ is strict can be found in \cite[Examples 5.15 \& 5.16]{CLM}, the latter one was strengthened for metrized complexes by Len \cite{Len17}. Both examples  can be extended for the uniform rank using Remark \ref{remdefunialgr}, that is, $\ralgg (G,\delta) < r_G(\delta)$ also in these cases. In both these examples $r_G(\delta)=2$, and we don't know any example where $r_G(\delta)=1$ and for which $\ralg (G,\delta) < r_G(\delta)$. 
\end{rmk}

\begin{cor}[Clifford inequality] \label{cor:CliffALG}
    For any divisor class $\delta$ of degree $0 \leq d \leq 2g -2$, we have \[r^{\mathrm{ALG}}(G, \delta) \leq \frac{d}{2}.\] 
\end{cor}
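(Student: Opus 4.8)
The plan is to deduce the Clifford inequality for $\ralgg$ immediately by combining the chain of inequalities already established with the Clifford inequality for the Baker–Norine rank. Concretely, by Proposition~\ref{prop:ineqralgalgg} and Theorem~\ref{thm:mainineq} we have $\ralgg(G,\delta)\leq r_G(\delta)$, and by Corollary~\ref{cor:cliffBN} the Baker–Norine rank satisfies $r_G(\delta)\leq \tfrac{d}{2}$ whenever $0\leq d\leq 2g-2$. Chaining these two facts gives $\ralgg(G,\delta)\leq \tfrac{d}{2}$, which is exactly the claim.

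So the proof is essentially one line. The only thing worth spelling out is that Theorem~\ref{thm:mainineq} applies to arbitrary graphs $G$ (possibly with weights and loops), so the hypotheses of Corollary~\ref{cor:cliffBN}, which is stated for the general Baker–Norine rank via the construction $r_G(\ud):=r_{G^\bullet}(\iota(\ud))$, are met without extra assumptions. I would also note in passing that since $\ralg(G,\delta)\leq \ralgg(G,\delta)$, this simultaneously reproves Fact~\ref{fact:eqrgralg}(3), so nothing is lost relative to the algebraic rank.

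There is no real obstacle here; the work has all been done in Theorem~\ref{thm:mainineq}. If one wanted a self-contained argument avoiding the passage through $r_G$, one could instead mimic the proof of Corollary~\ref{cor:cliffBN}: establish that $\ralgg(G,\ud)+\ralgg(G,\ud')\leq \ralgg(G,\ud+\ud')$ (superadditivity) and combine with Riemann–Roch for $\ralgg$ (Theorem~B) applied to $\ud'=\un{k}_G-\ud$. But superadditivity of $\ralgg$ is not obviously available from the material in the excerpt, whereas the comparison with $r_G$ is, so the chain-of-inequalities route is the natural and cleanest one. I would therefore simply write:

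\begin{proof}
    By Proposition~\ref{prop:ineqralgalgg} and Theorem~\ref{thm:mainineq}, $\ralgg(G,\delta)\leq r_G(\delta)$. Since $0\leq d\leq 2g-2$, Corollary~\ref{cor:cliffBN} gives $r_G(\delta)\leq \tfrac{d}{2}$, and combining the two inequalities yields the claim.
\end{proof}
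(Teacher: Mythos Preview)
Your proof is correct and matches the paper's own argument, which likewise deduces the inequality directly from Theorem~\ref{thm:mainineq} together with Corollary~\ref{cor:cliffBN}. The reference to Proposition~\ref{prop:ineqralgalgg} is harmless but unnecessary, since Theorem~\ref{thm:mainineq} alone already gives $\ralgg(G,\delta)\leq r_G(\delta)$.
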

\begin{proof}
    This follows directly from Theorem \ref{thm:mainineq} and the Clifford inequality for the Baker-Norine rank, Corollary~\ref{cor:cliffBN}.
\end{proof}

\begin{rmk}
    The inequalities $\ralg(G, \delta) \leq \ralgg(G, \delta) \leq r_G(\delta)$ ensure that whenever $\ralg(G, \delta) = r_G(\delta)$, also $\ralg(G, \delta) = \ralgg(G, \delta) = r_G(\delta)$. This happens, in particular, in the cases summarized in Fact~\ref{fact:eqrgralg} (5).
\end{rmk}

\subsection{Specialization of ranks}

The usefulness of the Baker-Norine rank $r_G(\delta)$ in algebraic geometry mainly comes from Baker's specialization lemma \cite{B}, which relates the rank of a line bundle on a smooth curve to the Baker-Norine rank of the multidegree of its specialization to a nodal central fiber. In the case of the algebraic rank, this is in fact an immediate consequence of upper-semicontinuity of the algebro-geometric rank, as observed in \cite[Lemma 2.7]{CLM}. The inequality $\ralg(G, \delta) \leq \ralgg(G, \delta)$ of the previous section implies that the same is true for the uniform algebraic rank (alternatively, this again follows from upper-semicontinuity of the algebro-geometric rank). 

More precisely, let $\mathcal X \to \Spec R$ be a \emph{regular one-parameter smoothing} of a nodal curve $X$; that is, a flat family of curves over a discrete valuation ring $R$ with smooth generic  fiber $\mathcal X_\eta$, special fiber $X_0$ isomorphic to $X$ and smooth total space. Then any line bundle $\mathcal L_\eta$ on $\cX_\eta$ extends to a line bundle $\mathcal L$ with central fiber a line bundle $L$ on $X$ whose multidegree we denote by $\ud$. The extension $\mathcal L$ is not unique, since twisting by components of $X$ gives non-isomorphic extensions of $\mathcal L_\eta$; but the multidegree of any two extensions lies in the same class $\delta = [\ud]$. In this situation we have:


\begin{cor}[Specialization]
    Let $X$ be a connected curve with dual graph $G$. Let $\mathcal{X}\to \Spec(R)$ be a regular one-parameter smoothing of $X$. Let $\mathcal L$ be a line bundle on $\mathcal X$ that restricts to a line bundle $\mathcal L_\eta$ on the generic fiber $\mathcal X_\eta$ and denote by $\delta$ the class of the multidegree of the restriction of $\mathcal L$ to the central fiber. Then 
\[r(\mathcal X_\eta, \mathcal{L}_\eta) \leq \ralgg(G, \delta).\]
\end{cor}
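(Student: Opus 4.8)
The plan is to deduce this corollary by combining Baker's classical specialization result for the algebraic-geometric rank with the chain of inequalities already established in the paper. The key observation is that the statement we need is essentially already contained in Theorem A (equivalently, in Proposition~\ref{prop:ineqralgalgg} together with Theorem~\ref{thm:mainineq} and the corresponding fact for $\ralg$), so the proof should be short. Concretely, I would first invoke the known specialization inequality for the algebraic rank, namely $r(\mathcal X_\eta, \mathcal L_\eta) \leq \ralg(G,\delta)$, which follows from upper-semicontinuity of the algebro-geometric rank as recorded in \cite[Lemma 2.7]{CLM}. Then I would chain this with Proposition~\ref{prop:ineqralgalgg}, which gives $\ralg(G,\delta) \leq \ralgg(G,\delta)$, to conclude $r(\mathcal X_\eta, \mathcal L_\eta) \leq \ralgg(G,\delta)$.

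Alternatively — and this is worth mentioning in the proof since the excerpt flags it — one can argue directly via upper-semicontinuity without routing through $\ralg$. For this I would fix any representative $\md \in \delta$ and note that, by possibly twisting the extension $\mathcal L$ by components of $X$, one obtains an extension $\mathcal L'$ of $\mathcal L_\eta$ whose central fiber $L$ has multidegree exactly $\md$ (all such extensions have multidegree in the single class $\delta$, so some twist realizes any chosen representative). Upper-semicontinuity of $h^0$ in the flat family $\mathcal L'$ then yields $r(\mathcal X_\eta, \mathcal L_\eta) \leq r(X, L) \leq r^{\max}(X, \md) \leq \rmaxx(G, \md)$. Since $\md \in \delta$ was arbitrary, taking the minimum over representatives gives $r(\mathcal X_\eta, \mathcal L_\eta) \leq \min_{\md \in \delta} \rmaxx(G,\md) = \ralgg(G,\delta)$.

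I would present the first (shorter) argument as the main proof and perhaps remark on the second. There is essentially no obstacle here: the corollary is a formal consequence of results proved earlier in the paper, so the only care needed is to correctly cite \cite[Lemma 2.7]{CLM} for the base case $r(\mathcal X_\eta,\mathcal L_\eta) \leq \ralg(G,\delta)$ and to observe that the hypotheses of the corollary (regular one-parameter smoothing, $X$ connected with dual graph $G$, $\delta$ the class of the central-fiber multidegree) match exactly the setup in which that lemma and Proposition~\ref{prop:ineqralgalgg} apply. If one prefers to keep the proof self-contained with respect to \cite{CLM}, the direct upper-semicontinuity argument of the second paragraph is the way to go, and the only mild subtlety there is the standard fact that twisting the extension by vertical divisors exhausts all multidegrees in the class $\delta$.
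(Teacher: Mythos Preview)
Your proposal is correct and matches the paper's own proof essentially verbatim: the paper likewise cites \cite[Lemma 2.7]{CLM} for $r(\mathcal X_\eta, \mathcal{L}_\eta) \leq \ralg(G, \delta)$ and then chains with the inequality $\ralg(G,\delta) \leq \ralgg(G,\delta)$ (the paper references Theorem~\ref{thm:mainineq} here, though Proposition~\ref{prop:ineqralgalgg} is the statement that actually gives this inequality, as you correctly note). Your alternative direct upper-semicontinuity argument is also anticipated by the paper in the paragraph preceding the corollary.
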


\begin{proof}
    By \cite[Lemma 2.7]{CLM} we have $r(\mathcal X_\eta, \mathcal{L}_\eta) \leq \ralg(G, \delta)$ and by Theorem~\ref{thm:mainineq} we have $\ralg(G, \delta) \leq \ralgg(G, \delta)$.
\end{proof}

\subsection{Other notions of rank} \label{sec:other ranks}

Summarizing, and using the same notation as above, we thus have 
\[r(\mathcal X_\eta, \mathcal{L}_\eta) \leq \ralg(G, \delta) \leq \ralgg(G, \delta) \leq r_G(\delta),\]
where all inequalities can be strict. The first by \cite[Corollary 3.3]{Len17}, the second by Example \ref{ex:strict ineq} and for the third cf. Remark~\ref{rmk:strict ineq}. Note, however, that in all examples mentioned only one of the above inequalities is strict. We do not know whether it is possible to find examples in which (at least) two inequalities are strict at the same time.

Since the introduction of the Baker-Norine rank and Baker's specialization lemma in \cite{BN, B}, various other efforts have been made to characterize the gap in the inequality $r(\mathcal X_\eta, \mathcal{L}_\eta) \leq r_G(\delta)$. An overview of what is known and how the uniform algebraic rank fits in this picture is sketched in Figure~\ref{fig1}. 

\tikzset{every picture/.style={line width=0.75pt}}
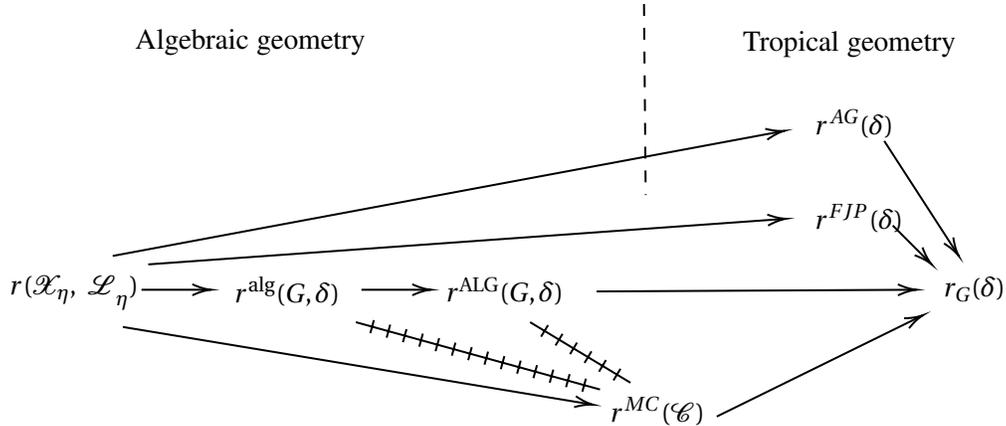
\begin{figure}[ht]	
\begin{tikzpicture}[x=0.8pt,y=0.8pt,yscale=-0.8,xscale=0.8]
\draw  [dash pattern={on 4.5pt off 4.5pt}]  (439.33,10.83) -- (440.33,123.83) ;
\draw    (142.33,179.83) -- (181.33,179.83) ;
\draw [shift={(183.33,179.83)}, rotate = 180] [color={rgb, 255:red, 0; green, 0; blue, 0 }  ][line width=0.75]    (10.93,-3.29) .. controls (6.95,-1.4) and (3.31,-0.3) .. (0,0) .. controls (3.31,0.3) and (6.95,1.4) .. (10.93,3.29)   ;
\draw    (272.33,179.83) -- (311.33,179.83) ;
\draw [shift={(313.33,179.83)}, rotate = 180] [color={rgb, 255:red, 0; green, 0; blue, 0 }  ][line width=0.75]    (10.93,-3.29) .. controls (6.95,-1.4) and (3.31,-0.3) .. (0,0) .. controls (3.31,0.3) and (6.95,1.4) .. (10.93,3.29)   ;
\draw    (411.33,180.83) -- (598.33,179.84) ;
\draw [shift={(600.33,179.83)}, rotate = 179.7] [color={rgb, 255:red, 0; green, 0; blue, 0 }  ][line width=0.75]    (10.93,-3.29) .. controls (6.95,-1.4) and (3.31,-0.3) .. (0,0) .. controls (3.31,0.3) and (6.95,1.4) .. (10.93,3.29)   ;
\draw    (146.33,164.83) -- (521.34,137.98) ;
\draw [shift={(523.33,137.83)}, rotate = 175.9] [color={rgb, 255:red, 0; green, 0; blue, 0 }  ][line width=0.75]    (10.93,-3.29) .. controls (6.95,-1.4) and (3.31,-0.3) .. (0,0) .. controls (3.31,0.3) and (6.95,1.4) .. (10.93,3.29)   ;
\draw    (125.33,159.83) -- (520.37,86.2) ;
\draw [shift={(522.33,85.83)}, rotate = 169.44] [color={rgb, 255:red, 0; green, 0; blue, 0 }  ][line width=0.75]    (10.93,-3.29) .. controls (6.95,-1.4) and (3.31,-0.3) .. (0,0) .. controls (3.31,0.3) and (6.95,1.4) .. (10.93,3.29)   ;
\draw    (131.33,201.83) -- (407.36,247.51) ;
\draw [shift={(409.33,247.83)}, rotate = 189.4] [color={rgb, 255:red, 0; green, 0; blue, 0 }  ][line width=0.75]    (10.93,-3.29) .. controls (6.95,-1.4) and (3.31,-0.3) .. (0,0) .. controls (3.31,0.3) and (6.95,1.4) .. (10.93,3.29)   ;
\draw    (482.33,254.83) -- (602.54,195.72) ;
\draw [shift={(604.33,194.83)}, rotate = 153.81] [color={rgb, 255:red, 0; green, 0; blue, 0 }  ][line width=0.75]    (10.93,-3.29) .. controls (6.95,-1.4) and (3.31,-0.3) .. (0,0) .. controls (3.31,0.3) and (6.95,1.4) .. (10.93,3.29)   ;
\draw    (586.33,141.83) -- (607.92,163.42) ;
\draw [shift={(609.33,164.83)}, rotate = 225] [color={rgb, 255:red, 0; green, 0; blue, 0 }  ][line width=0.75]    (10.93,-3.29) .. controls (6.95,-1.4) and (3.31,-0.3) .. (0,0) .. controls (3.31,0.3) and (6.95,1.4) .. (10.93,3.29)   ;
\draw    (581.33,91.83) -- (624.25,158.15) ;
\draw [shift={(625.33,159.83)}, rotate = 237.09] [color={rgb, 255:red, 0; green, 0; blue, 0 }  ][line width=0.75]    (10.93,-3.29) .. controls (6.95,-1.4) and (3.31,-0.3) .. (0,0) .. controls (3.31,0.3) and (6.95,1.4) .. (10.93,3.29)   ;
\draw    (269,199) -- (413.33,238.83) (279.7,197.8) -- (277.58,205.52)(289.34,200.46) -- (287.22,208.18)(298.98,203.13) -- (296.85,210.84)(308.62,205.79) -- (306.49,213.5)(318.26,208.45) -- (316.13,216.16)(327.9,211.11) -- (325.77,218.82)(337.54,213.77) -- (335.41,221.48)(347.18,216.43) -- (345.05,224.14)(356.82,219.09) -- (354.69,226.8)(366.46,221.75) -- (364.33,229.46)(376.1,224.41) -- (373.97,232.12)(385.74,227.07) -- (383.61,234.78)(395.38,229.73) -- (393.25,237.44)(405.02,232.39) -- (402.89,240.1) ;
\draw    (372,199) -- (431.33,234.83) (382.63,200.75) -- (378.49,207.59)(391.19,205.92) -- (387.05,212.76)(399.75,211.09) -- (395.61,217.93)(408.31,216.25) -- (404.17,223.1)(416.87,221.42) -- (412.73,228.27)(425.43,226.59) -- (421.29,233.44) ;

\draw (62,169.4) node [anchor=north west][inner sep=0.75pt]    {$r(\mathcal{X_{\eta } ,\ L}_{\eta })$};
\draw (195,170.4) node [anchor=north west][inner sep=0.75pt]    {$\ralg( G,\delta)$};
\draw (320,170.4) node [anchor=north west][inner sep=0.75pt]    {$\ralgg( G,\delta)$};
\draw (614,170.4) node [anchor=north west][inner sep=0.75pt]    {$r_{G}( \delta )$};
\draw (417,241.4) node [anchor=north west][inner sep=0.75pt]    {$r^{MC}(\mathcal{C})$};
\draw (538,127.4) node [anchor=north west][inner sep=0.75pt]    {$r^{FJP}( \delta )$};
\draw (538,71.4) node [anchor=north west][inner sep=0.75pt]    {$r^{AG}( \delta )$};
\draw (137,24) node [anchor=north west][inner sep=0.75pt]   [align=left] {Algebraic geometry};
\draw (496,25) node [anchor=north west][inner sep=0.75pt]   [align=left] {Tropical geometry};
\end{tikzpicture}		
\caption{Different notions of rank and their known relations.}
\label{fig1}
\end{figure}

In the figure, all entries denote some notion of rank whose value lies between $r(\mathcal X_\eta, \mathcal{L}_\eta)$ and $r_G(\delta)$. Apart from the ones already discussed, these are the ranks of metrized complexes $r^{\mathrm{MC}}(\mathcal C)$ of Amini and Baker \cite{AB}, the refinement of the Baker-Norine rank $r^{\mathrm{FJP}}(\delta)$ given by imposing tropical independence due to Farkas, Jensen and Payne \cite[Definition 6.5]{FJP}, and the rank $r^{\mathrm{AG}}(\delta)$ of tropical limit linear series developed by Amini and Gierczak \cite[\S 1.5]{AG22}.\footnote{Part of the definition of both $r^{\mathrm{AG}}(\delta)$ and $r^{\mathrm{FJP}}(\delta)$ is the condition imposed for the Baker-Norine rank, in addition to further assumptions. Thus a complete tropical linear series of rank $r$, in their sense, has Baker-Norine rank at least $r$ and $r^{\mathrm{AG}}(\delta), r^{\mathrm{FJP}}(\delta) \leq r_G(\delta)$.} Two ranks are connected by an arrow if it is known that one is less or equal than the other, with the one towards which the arrow points being bigger. A crossed-out line indicates that it is known that no inequality holds in either direction (the result that $\ralg(G,\delta)$ and $r^{\mathrm{MC}}(\mathcal C)$ are not comparable is due to \cite[\S 3]{Len17}, and the examples presented there generalize to $\ralgg(G,\delta)$). Finally, if there is no arrow between two entries, it means that, as far as we are aware, no relation is known.


\section{Properties of the uniform algebraic rank}

\subsection{The Riemann-Roch theorem}

Next, we establish the Riemann-Roch theorem for the uniform algebraic rank $\ralgg(G, \delta)$ and the auxiliary notion $\rmaxx(G, \ud)$ used in its definition (see the beginning of Section~\ref{sec:uniform algebraic rank}). 



\begin{thm}[Riemann-Roch]\label{rrminmax} 
Let $G$ be a finite graph of genus $g$, $\ud$ a divisor of degree $d$ on $G$ with equivalence class $\delta$. Then
	\begin{itemize}
	\item[(a)] $\rmaxx(G, \ud) - \rmaxx(G, \un{k}_G -\ud ) = d - g + 1.$
	\item[(b)] $\ralgg(G,\delta) - \ralgg(G, [\un{k}_G - \ud] ) = d - g + 1.$
	\end{itemize}
\end{thm}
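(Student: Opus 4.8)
The plan is to deduce both parts from the Riemann-Roch theorem for $\rmax(X,\cdot)$ on a single curve, namely Fact~\ref{fact:eqrgralg}(1)(a), by exploiting that the correction term $d-g+1$ does not depend on the curve $X\in M_G$ and therefore passes freely through both the maximum over $M_G$ and the minimum over a divisor class.

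For part (a), every $X\in M_G$ has dual graph $G$, hence $\un{k}_{G_X}=\un{k}_G$, so Fact~\ref{fact:eqrgralg}(1)(a) reads $\rmax(X,\ud)=\rmax(X,\un{k}_G-\ud)+(d-g+1)$ for every such $X$. Taking the maximum over $X\in M_G$ on both sides and using that adding a constant commutes with $\max$ gives $\rmaxx(G,\ud)=\rmaxx(G,\un{k}_G-\ud)+(d-g+1)$, which is (a).

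For part (b), I would first observe that $\ud'\mapsto\un{k}_G-\ud'$ restricts to a bijection from the set of representatives of $\delta$ onto the set of representatives of $[\un{k}_G-\ud]$, since it preserves linear equivalence and is an involution. Substituting part (a) into the definition of $\ralgg$ then yields
\begin{align*}
\ralgg(G,\delta)=\min_{\ud'\in\delta}\rmaxx(G,\ud')&=\min_{\ud'\in\delta}\bigl(\rmaxx(G,\un{k}_G-\ud')+(d-g+1)\bigr)\\
&=(d-g+1)+\min_{\me\sim\un{k}_G-\ud}\rmaxx(G,\me)=(d-g+1)+\ralgg(G,[\un{k}_G-\ud]),
\end{align*}
and rearranging gives (b). This is the same mechanism by which $r^{\min}$ and $\ralg$ inherit Riemann-Roch from $\rmax$ in Fact~\ref{fact:eqrgralg}(1), only with the order of the two extremizations ($\min$ over $\delta$, $\max$ over $M_G$) interchanged.

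I do not expect a genuine obstacle here: the entire content is that the Riemann-Roch defect $d-g+1$ is curve-independent, so it factors out of both extremizations, and what remains is bookkeeping with the canonical-complement bijection. If one prefers to avoid the substitution, the same conclusion follows by rephrasing everything in threshold form, i.e.\ $\rmax(X,\ud)\ge s\iff\rmax(X,\un{k}_G-\ud)\ge s-(d-g+1)$, in the style of Remark~\ref{remdefunialgr}.
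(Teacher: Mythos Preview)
Your argument is correct and, in fact, cleaner than the paper's own proof. The paper follows the template of \cite[Proposition~2.6]{CLM}: for (a) it proves by contradiction the auxiliary statement that a curve $X\in M_G$ realizes $\rmaxx(G,\ud)$ if and only if it realizes $\rmaxx(G,\ud^*)$, and then reads off the identity from Riemann--Roch on $X$; for (b) it again argues by contradiction, assuming a strictly smaller realizer exists on the residual side and deriving an inequality that conflicts with the definition of $\ralgg$. Your route bypasses all of this by using only that the defect $d-g+1$ is constant in $X$ and in $\ud'$, so it commutes with $\max_{X\in M_G}$ and $\min_{\ud'\in\delta}$, together with the obvious bijection $\ud'\leftrightarrow \un{k}_G-\ud'$ between $\delta$ and $\delta^*$. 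What the paper's approach buys is the extra information that the \emph{same} realizer works on both sides (same maximizing curve for $\rmaxx$, same minimizing pair for $\ralgg$); what your approach buys is a two-line proof with no case analysis or contradiction.
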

\begin{proof}
Set $\ud^* = \un{k}_G -\ud$ and $\delta^* = [\un{k}_G - \ud]$ (notice that since $\ud \sim \underline{e}$ implies $\ud^* \sim \underline{e}^*$, we have that $\delta^* :=[\ud^*]$ is well defined). 
We follow the arguments of the proof of \cite[Proposition 2.6]{CLM}. By the same proposition we know that
\begin{equation}\label{rrm}
	\rmax(X,\ud)-\rmax(X,\ud^*)=d-g+1.
\end{equation}
We claim that, given $X\in M_G$, we have
\begin{equation}\label{rmaxx} \rmaxx(G,\ud) = \rmax(X,\ud) \Longleftrightarrow \rmaxx(G,\ud^*) = \rmax(X,\ud^*) .
\end{equation}
Observe that \eqref{rrm} and  \eqref{rmaxx} imply that (a) holds. Furthermore, since $\ud^{**}=\ud$, 
it suffices to prove only one  implication in \eqref{rmaxx}.
With this in mind, assume that $\rmaxx(G,\ud)=\rmax(X,\ud)=r(X,L)$, for some $L\in\Pic^{\ud}(X)$. By \eqref{rrm} and Riemann-Roch on $X$, we have $\rmax(X,\ud^*)=r(X,L^*)$, where we use the notation $L^*$ to indicate the residual line bundle $K_X\otimes L^{-1}$.  Suppose by contradiction that there exists a curve $Y\in M_G $ and $M^* \in \Pic^{\ud^*}(Y)$ such that
\[r(Y, M^*)= \rmax(Y,\ud^*)=\rmaxx(G,\ud^*)>r(X, L^*).\]
In this case, by Riemann-Roch on $X$, we have
\[r(X,L)=r(X,L^*)+d-g+1<r(Y,M^*)+d-g+1=r(Y,M).\]
By the definition of $\rmaxx$,
\[ \rmaxx(G, \ud)=\rmax(X,\ud)=r(X,L) \geq \rmax(Y,\ud) \geq r(Y,M),\]
contradicting the previous inequality. Therefore \eqref{rmaxx} is proven.

Now, let $\ralgg(G, \delta)=r(X,L)$. So, \[\rmaxx(G, \ud) = \rmax(X,\ud) = r(X,L)\] 
and, by \eqref{rrm} and \eqref{rmaxx}, we get:
\[\rmaxx(G, \ud^*) = \rmax(X,\ud^*) = r(X,L^*).\]
By Riemann-Roch on $X$, to prove (b) it suffices to prove that $\ralgg(G, \delta^*)=r(X,L^*)$. By contradiction, suppose that there exists $Y\in M_G$, $\underline{e}^*\in \delta^*$ and $N^*\in\Pic^{\underline{e}^*}(Y)$ such that
\[r(X,L^*)>\ralgg(G,\delta^*) = \rmaxx(Y, N^*) = \rmax(Y,\underline{e}^*)=r(Y,N^*).\]
By Riemann-Roch on $X$ we have
\[r(X,L)=r(X,L^*)+d-g+1>r(Y,N^*)+d-g+1=r(Y,N).\]
Since $\underline{e}\in \delta$, it follows that
\[ r(X,L)=\ralgg(G,\delta)\leq \rmaxx(Y,N) = r(Y,N), \]
contradicting the preceding inequality.
\end{proof}

The next corollary shows that, as for the algebraic rank, semibalanced divisors on $G$ with degree outside the special range $0 \leq d \leq 2g - 2$ realize the uniform algebraic rank (see Section~\ref{sec:semibalanced} for the definition of semibalanced). In the special range, it seems to be a difficult question to find explicit representatives $\ud \in \delta$ that realize the uniform algebraic rank; see Section~\ref{sec:explicit clifford} for a related discussion.

\begin{cor} \label{coroteob}  Let $G$ be a semistable graph of genus $g$ and let $\ud \in \Div^d(G)$ be a semibalanced divisor. Then the following facts hold.
\begin{itemize}
\item[(a)] If $d<0$, then $\rmaxx(G,\ud) = r_G(\ud) = -1$.
\item[(b)] If $d>2g-2$, then $\rmaxx(G,\ud) = r_G(\ud) = d - g$.
\item[(c)] If $d=2g-2$, then $\rmaxx(G,\ud) = r_G(\ud) \leq g-1$ and equality holds if and only if $\ud \sim \un{k}_G$.
\item[(d)] If $d=0$, then $\rmaxx(G,\ud) = r_G(\ud) \leq 0$ and equality holds if and only if $\ud \sim \un{0}$.
\end{itemize}
\end{cor}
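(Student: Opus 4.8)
The plan is to derive (a)--(d) from four ingredients, all available above: the Riemann--Roch theorem for $\rmaxx$ (Theorem~\ref{rrminmax}(a)), the Riemann--Roch theorem for the Baker--Norine rank (Theorem~\ref{rrg}), the elementary consequences of the latter collected in Corollary~\ref{corrrfg}, and Fact~\ref{fact:eqrgralg}(4), which says that for a semibalanced divisor $\ud$ of degree $d\geq 2g-2$ on a semistable graph one has $\rmax(X,\ud)=r_G(\ud)$ for \emph{every} $X\in M_G$. Since the right-hand side is independent of $X$ and $M_G\neq\emptyset$, taking the maximum over $X$ gives immediately $\rmaxx(G,\ud)=r_G(\ud)$ for every semibalanced $\ud$ of degree $d\geq 2g-2$. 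Combining this with the value $r_G(\ud)=d-g$ from Corollary~\ref{corrrfg}(d) proves (b), and combining it with the bound $r_G(\ud)\leq g-1$ and its equality criterion from Corollary~\ref{corrrfg}(b) proves (c). (Throughout, $g\geq 2$ so that ``semibalanced'' is defined, which also makes the degree inequalities below non-degenerate.)

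For (a) and (d) the idea is to pass to the residual divisor $\ud^*:=\un{k}_G-\ud$ and invoke the cases just proved. The one point beyond bookkeeping is that $\ud^*$ is again semibalanced: this follows from the identities $M_Z(2g-2-d)=\un{k}_G(Z)-m_Z(d)$ and $m_Z(2g-2-d)=\un{k}_G(Z)-M_Z(d)$, which are immediate from Definition~\ref{balanced}, because together with $\ud^*(Z)=\un{k}_G(Z)-\ud(Z)$ they convert the semibalancedness inequalities $m_Z(d)\leq\ud(Z)\leq M_Z(d)$ into $m_Z(2g-2-d)\leq\ud^*(Z)\leq M_Z(2g-2-d)$, for every $Z\subseteq V(G)$.

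With this available, case (a) ($d<0$): $\ud^*$ is semibalanced of degree $2g-2-d>2g-2$, so part (b) gives $\rmaxx(G,\ud^*)=r_G(\ud^*)=g-2-d$, and Theorem~\ref{rrminmax}(a) then forces $\rmaxx(G,\ud)=(d-g+1)+\rmaxx(G,\ud^*)=-1$; meanwhile $r_G(\ud)=-1$ by Corollary~\ref{corrrfg}(c). Case (d) ($d=0$): $\ud^*$ is semibalanced of degree $2g-2$, so part (c) gives $\rmaxx(G,\ud^*)=r_G(\ud^*)\leq g-1$, with equality exactly when $\ud^*\sim\un{k}_G$, i.e.\ when $\ud\sim\un{0}$; since the right-hand side $d-g+1=1-g$ is the same in Theorem~\ref{rrminmax}(a) and in Theorem~\ref{rrg} for the pair $(\ud,\ud^*)$, we get $\rmaxx(G,\ud)=(1-g)+\rmaxx(G,\ud^*)=(1-g)+r_G(\ud^*)=r_G(\ud)\leq 0$, with equality in the last step iff $r_G(\ud^*)=g-1$, i.e.\ iff $\ud\sim\un{0}$. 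I do not expect a genuine obstacle, as the single substantive input, Fact~\ref{fact:eqrgralg}(4), is assumed; the step most deserving care in a full write-up is the verification that $\un{k}_G-\ud$ remains semibalanced, together with keeping the two Riemann--Roch relations in case (d) straight.
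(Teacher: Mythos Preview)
Your proposal is correct and follows essentially the same route as the paper: use Fact~\ref{fact:eqrgralg}(4) to obtain $\rmaxx(G,\ud)=r_G(\ud)$ for (b) and (c), then pass to the residual via Riemann--Roch for $\rmaxx$ (Theorem~\ref{rrminmax}(a)) to deduce (a) and (d), with Corollary~\ref{corrrfg} supplying the values and equality criteria for $r_G$. The paper's version is terser and leaves implicit the fact that $\un{k}_G-\ud$ is again semibalanced; your explicit verification of this via the identities $M_Z(2g-2-d)=\un{k}_G(Z)-m_Z(d)$ and $m_Z(2g-2-d)=\un{k}_G(Z)-M_Z(d)$ is a welcome addition, since without it the appeal to (b) and (c) for $\ud^*$ would not be justified.
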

\begin{proof} 
By \cite[Theorem 2.9]{C1}, if $d \geq 2g-2$ then every semibalanced $\ud\in \delta$ satisfies $\rmax(X,\ud) = r_G(\ud)$, for every $X\in M_G$. Thus $\rmaxx(G,\ud) = r_G(\ud)$ in (b) and (c). Applying Riemann-Roch for $\rmaxx$ (Theorem~\ref{rrminmax}) this implies $\rmaxx(G,\ud) = r_G(\ud)$ also in (a) and (d).  

The remaining statements are well-known for $r_G(\ud)$ (cf. Corollary~\ref{corrrfg}).
\end{proof}

\subsection{Realization by effective representatives}

In this section, we show that to compute both the algebraic rank $\ralg(G, \delta)$ and the uniform algebraic rank $\ralgg(G, \delta)$, it suffices to check effective representatives $\md \in \delta$. As far as we know, this is new already for the algebraic rank. 

To prove this statement, we need to recall the Dhar decomposition of the graph $G$ with respect to a subset of vertices $V$. This is a generalization of the Dhar decomposition with respect to a single vertex $v$ (see, e.g., \cite[\S 3.4]{CLM} for a formulation in our context) studied independently by the first and second authors \cite{B21, C23}.

To this end, let $V \subset V(G)$ be a set of vertices, and $\md$ a divisor on $G$ effective away from $V$. We define a sequence of subsets of vertices \begin{equation} \label{eq:Dhar decomposition}
    V = V_0 \subset V_1 \subset \ldots \subset V_n,
\end{equation}
iteratively as follows. Given $V_i$, to obtain $V_{i+1}$ we add all vertices $v \in V(G) \setminus V_i$ for which $v \cdot V_i > \ud(v)$ (where, as before, we identify $V_i$ with the divisor with value $1$ on each vertex in $V_i$ and $0$ on all other vertices). 

Since there are only finitely many vertices, this process needs to stabilize at some point and we have $V_{n} = V_{n+1}$. We set \[\W(\md, V) \coloneqq V(G) \setminus V_n\] and call \[V(G) = V_n \sqcup \W(\md, V)\] the \emph{Dhar decomposition of $G$ with respect to $V$ and $\md$.} By construction, the divisor $\md - \underline t_{V_n}$ is still effective away from $V$ (and, by definition, linearly equivalent to $\ud$).

Before we can state the main result of this section, we need the following observation. For the definition of $V$-reduced divisors see Definition~\ref{def:reduced}. 

\begin{lma} \label{lma:injectivity}
    Let $X$ be a curve with dual graph $G$ and $Y$ a subcurve of $X$ whose irreducible components correspond to the subset of vertices $V \subset V(G)$. Let $L$ be a line bundle on $X$ whose multidegree $\md$ is $V$-reduced. Then the restriction map \[H^0(X,L) \to H^0(Y,L|_Y)\] is injective. 
\end{lma}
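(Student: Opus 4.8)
The plan is to argue by contradiction, using only the second defining property of $V$-reduced divisors (property (1) of Definition~\ref{def:reduced} is not needed). Suppose $s \in H^0(X,L)$ restricts to zero on $Y$ but $s \neq 0$. Let $A \subseteq V(G)$ denote the set of vertices $v$ for which $s$ does \emph{not} vanish identically on the corresponding component $C_v$ of $X$. Since $s \neq 0$, the set $A$ is non-empty; and since $s|_Y = 0$ while the components of $Y$ are exactly the $C_v$ with $v \in V$, we have $A \subseteq V(G)\setminus V$. Thus $A$ is a non-empty subset of $V(G)\setminus V$, and the goal is to contradict property (2) of Definition~\ref{def:reduced} by showing that $\md(v) \geq v\cdot A^c$ holds for \emph{every} $v \in A$.

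To see this, fix $v \in A$. Every node of $X$ lying on $C_v$ and on some component $C_w$ with $w \notin A$ is a point at which $s$ vanishes, since $s$ vanishes identically on $C_w$. There are precisely $v\cdot A^c$ such nodes (loops at $v$ do not contribute, as both of their endpoints lie in $A$), and since each such node is a smooth point of $C_v$ where two distinct components meet, they pull back to $v\cdot A^c$ distinct points on the normalization $\widetilde C_v$ of $C_v$. Pulling back $s|_{C_v}$, which is a nonzero section of a line bundle of degree $\md(v)$ on $C_v$, to $\widetilde C_v$ produces a nonzero section of a degree-$\md(v)$ line bundle on a smooth projective curve that vanishes at these $v\cdot A^c$ distinct points; hence its zero divisor, of degree $\md(v)$, has at least $v\cdot A^c$ points in its support, giving $\md(v) \geq v\cdot A^c$.

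Since this inequality holds for all $v \in A$, with $A$ a non-empty subset of $V(G)\setminus V$, it directly contradicts property (2) of the $V$-reduced divisor $\md$, which guarantees the existence of some $v \in A$ with $\md(v) < v\cdot A^c$. Therefore $s = 0$, and the restriction map $H^0(X,L)\to H^0(Y,L|_Y)$ is injective. This recovers, and generalizes to subcurves, the classical fact for $V = \{u\}$. The argument is essentially bookkeeping, so there is no real obstacle; the only point requiring a little care is matching the number of nodes on $C_v$ separating $A$ from its complement with the intersection number $v\cdot A^c$ in the paper's conventions (accounting for multiple edges and loops), and noticing that it is the complement $V(G)\setminus A$, rather than $V$ itself, that governs the constraint, which is exactly how property (2) is phrased.
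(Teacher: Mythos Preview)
Your proof is correct and uses the same underlying idea as the paper's: a section in the kernel vanishes on $Y$, and the $V$-reducedness forces this vanishing to propagate to all of $X$. The framing, however, is mildly different. The paper argues \emph{iteratively}, following the Dhar burning process: starting from $V$, it repeatedly finds a vertex $v$ outside the current burnt set with $\md(v) < v\cdot V_i$, concludes $s$ vanishes on $C_v$, and enlarges the burnt set; since $\md$ is $V$-reduced the Dhar set $\W(\md,V)$ is empty and the process exhausts $V(G)$. You instead go in one shot from the complementary side: you take $A$ to be the set where $s$ does \emph{not} vanish and apply property~(2) of Definition~\ref{def:reduced} directly to $A$. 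Your version is a bit cleaner, since it avoids unrolling the iteration and appeals to the definition verbatim; the paper's version makes the mechanism (Dhar burning) more visible, which ties in with the Dhar decomposition used immediately afterward in Theorem~\ref{thm:effective realization}.
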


\begin{proof}
    The kernel of the linear map $H^0(X,L) \to H^0(Y,L|_Y)$ is given by global sections $s$ of $L$ that vanish on $Y$. Let $v \not \in V$ be a vertex on which $\md$ turns negative after a chip-firing move along the complement of $V$. This means that $\md(v) < v\cdot V=:k$.
    Since $s$ vanishes by assumption on all components $X_w$ corresponding to vertices $w \in V$, $s$ also vanishes on the $k$ points of intersection of $X_v$ with the components $X_w$. Since $L$ has degree less than $k$ on $X_v$, this implies that $s$ vanishes on all of $X_v$. Repeating this argument shows that $s$ vanishes on all components $X_v$ where $v$ is not in the Dhar set $\W(\md, V)$. However, $\W(\md, V) = \emptyset$ if $\md$ is $V$-reduced, since the vertices $v \in \W(\md, V)$ satisfy by definition $v \cdot \W(\md, V)^c \geq 0$. Thus $s$ needs to vanish on all of $X$, and the kernel of the map $H^0(X,L) \to H^0(Y,L|_Y)$ consists only of the zero section.
\end{proof}

The point of the next theorem is that in order to calculate the algebraic and the uniform algebraic rank, it suffices to restrict to effective divisors $\md \in \delta$.

\begin{thm} \label{thm:effective realization}
    Let $\delta$ be an effective divisor class on a graph $G$.  Then there exists a curve $X$ and an effective divisor $\md$ such that $\ralg(G, \delta) = \rmax(X, \md)$. Similarly, there exists an effective divisor $\md'$ such that $\ralgg(G, \delta) = \rmaxx(G, \md')$.
\end{thm}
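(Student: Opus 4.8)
The plan is to show that whenever a pair $(X, \md)$ computes the relevant rank, one can replace $\md$ by a linearly equivalent effective divisor without decreasing the quantity $\rmax(X, \cdot)$, or $\rmaxx(G, \cdot)$, respectively. The natural candidate is a $V$-reduced representative for a suitable $V$, and the technical heart is Lemma~\ref{lma:injectivity}, which controls $h^0$ under restriction to a subcurve once the multidegree is $V$-reduced. Let me first handle the algebraic rank. Suppose $\ralg(G,\delta) = r^{\min}(X,\delta) = \rmax(X,\md)$ for some $X \in M_G$ and $\md \in \delta$; this is the representative of $\delta$ on which $\rmax(X,\cdot)$ is minimal. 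Pick a line bundle $L \in \Pic^{\md}(X)$ with $r(X,L) = \rmax(X,\md)$, and let $Y$ be the subcurve of $X$ on which the sections of $L$ ``concentrate'': concretely, I would take $V \subset V(G)$ to be the set of vertices $v$ such that $L|_{X_v}$ contributes positively, or more robustly, simply replace $\md$ inside its class by a $V$-reduced representative for an appropriate $V$ and argue that this does not change $\rmax(X,\cdot)$ since $\md$ was already rank-minimizing. The key point is that a $V$-reduced divisor is effective away from $V$; to get an \emph{effective} representative one wants $V = \emptyset$, i.e.\ the $\emptyset$-reduced (equivalently: every $u$-reduced, iterated) representative to already be effective. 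This is where the hypothesis that $\delta$ is effective enters: an effective divisor class contains an effective representative, and I would run the Dhar decomposition argument to show that starting from an effective representative and the line bundle achieving $\rmax$, one can reduce to an effective $\md$ with $\rmax(X,\md) = \ralg(G,\delta)$ still holding.

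More precisely, here is the mechanism I expect to use. Fix any effective $\md_0 \in \delta$ and the curve $X$ realizing $\ralg(G,\delta) = \rmax(X, \md_1)$ where $\md_1$ is the rank-minimizing representative. Take $L_1 \in \Pic^{\md_1}(X)$ with $r(X, L_1) = \rmax(X,\md_1)$. The idea is to apply a chip-firing move: let $V = \W(\md_1, V')$ be obtained from the Dhar decomposition so that $\md_1 - \underline t_{V_n}$ becomes ``more effective,'' and iterate. Each such move replaces $L_1$ by $L_1(-\sum_{v \in V_n} X_v)$ or a twist thereof, and one checks using Lemma~\ref{lma:injectivity} (applied with $Y$ the appropriate subcurve) that $h^0$ does not drop: the sections surviving the twist are exactly those vanishing on the fired subcurve, and the injectivity statement guarantees these still separate points on the rest. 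Because $\md_1$ was chosen to minimize $\rmax(X, \cdot)$ over $\delta$, and all twists stay in $\delta$, the value $\rmax(X, \cdot)$ must in fact be constant at $\ralg(G,\delta)$ along this chain of twists; since the chain terminates at an effective divisor (by finiteness of the Dhar process together with effectivity of $\delta$), we obtain an effective $\md$ with $\rmax(X, \md) = \ralg(G,\delta)$. For the uniform algebraic rank the argument is formally the same but simpler in one respect: we start from $\ralgg(G,\delta) = \rmaxx(G, \md')$ where $\md'$ minimizes $\rmaxx(G, \cdot)$ over $\delta$, and we need, for the effective replacement $\md''$, that $\rmaxx(G, \md'') = \ralgg(G,\delta)$; since $\rmaxx(G, \cdot) \geq \ralgg(G,\delta)$ on all of $\delta$ and we can exhibit, via the twisting argument applied to the witnessing curve, that it does not exceed $\ralgg(G,\delta)$ on the effective representative, equality follows.

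The main obstacle I anticipate is the bookkeeping in the twisting step: one must verify that twisting down by a subcurve (as prescribed by the Dhar decomposition of the \emph{multidegree}) interacts correctly with the choice of line bundle, i.e.\ that $h^0(X, L) = h^0(X, L(-Y_{\mathrm{fired}}))$ whenever the fired locus is forced by the reducedness condition, and that this holds \emph{for the optimal $L$}, not merely for a generic one. Lemma~\ref{lma:injectivity} gives exactly the needed injectivity $H^0(X,L) \hookrightarrow H^0(Y, L|_Y)$ for $V$-reduced multidegrees, so the strategy is to arrange that at each stage the relevant multidegree is reduced with respect to the complement of the subcurve being fired, apply the lemma to conclude no sections are lost, and then invoke the minimality defining $\ralg$ (resp.\ $\ralgg$) to upgrade ``no loss'' to ``equality.'' A secondary subtlety is making sure the iteration genuinely terminates at an \emph{effective} divisor rather than merely at a $V$-reduced one with $V \neq \emptyset$; this should follow because $\delta$ is effective, so its $u$-reduced representative (for any single vertex $u$) is effective, and the Dhar/chip-firing dynamics can be steered toward it.
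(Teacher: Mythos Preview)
Your overall framework---chip-fire toward an effective representative via the Dhar decomposition and use Lemma~\ref{lma:injectivity} to control sections---matches the paper's. But the direction of the key inequality is reversed, and this is not cosmetic. You argue that the twist of the optimal $L_1$ satisfies ``$h^0$ does not drop,'' aiming at $\rmax(X,\md') \geq \rmax(X,\md_1)$. Combined with the minimality of $\md_1$ this is vacuous: minimality already gives that inequality for every $\md' \in \delta$. What you actually need is $\rmax(X,\md') \leq \rmax(X,\md_1)$, and that requires bounding $h^0$ of \emph{every} $L' \in \Pic^{\md'}(X)$ from above, not exhibiting a single $L'$ with large $h^0$. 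The paper does this by taking $V$ to be the set of vertices where $\md$ is negative, setting $\md' = \md - \underline t_{V_n}$ from the Dhar decomposition, and building a bijection $\Pic^{\md}(X) \to \Pic^{\md'}(X)$ (tensor with a fixed line bundle of multidegree $-\underline t_{V_n}$) under which $h^0$ can only decrease: since $\md$ is negative on $V$, every section of every $L$ of multidegree $\md$ vanishes on the subcurve $Y$ over $V_n$, giving $H^0(X,L) \cong H^0\bigl(Y^c, L|_{Y^c}(-Y\cap Y^c)\bigr)$, and the right-hand side then bounds $h^0(X,L')$ from above via restriction. Your target equality $h^0(X,L) = h^0(X, L(-Y_{\mathrm{fired}}))$ in the last paragraph is again a statement about one $L$, not about all $L'$ of the new multidegree.

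The same reversal bites for $\ralgg$: you propose to bound $\rmaxx(G,\md'')$ via ``the twisting argument applied to the witnessing curve,'' but $\rmaxx$ is a maximum over \emph{all} $X \in M_G$, so an argument on a single curve cannot cap it. The paper's inequality $\rmax(X,\md') \leq \rmax(X,\md)$ holds for every $X$ simultaneously, which is exactly what is needed to pass to $\rmaxx$ and hence to treat $\ralg$ and $\ralgg$ by one stroke. Two smaller gaps: you never pin down $V$ (the paper takes it to be the negative locus of $\md$, which is nonempty precisely when $\md$ is not effective), and termination of the iteration at an \emph{effective} divisor is not automatic from Dhar alone---it uses the effectivity of $\delta$ to guarantee $\W(\md,V) \neq \emptyset$ at each step, via \cite[Proposition~3.8 and Algorithm~3.10]{C23}.
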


\begin{proof}
    For both claims, it suffices to show the following: Suppose $\md$ is not effective, but its class $\delta = [\md]$ is effective. Then there is an effective divisor $\md' \sim \md$ such that for any curve $X$ with dual graph $G$ we have \[\rmax(X, \md') \leq  \rmax(X, \md).\]

    Let $V \subset V(G)$ denote the subset of vertices on which $\md$ fails to be effective, which is non-empty by assumption. Consider the Dhar decomposition $V(G) = V_n \sqcup \W(\md, V)$ with respect to $\md$ and $V$, as described above. By \cite[Proposition 3.8]{C23}, $\W(\md, V)$ is not empty since the class of $\md$ is effective. 
    
    Now let $X$ be a curve with dual graph $G$, $Y \subset X$ the subcurve corresponding to $V_n$ and $Y^c$ the one corresponding to $\W(\md, V)$. Let $L$ be a line bundle of multidegree $\md$ and rank $r$. We construct a line bundle $L'$ of rank at most $r$ and of multidegree \[\md' = \md - \underline t_{V_n}.\] 

    It follows from Lemma~\ref{lma:injectivity} and the fact that $\md$ has negative value on vertices in $V$, that any global section of $L$ needs to vanish along $Y$. Thus we have an identification \[H^0(X,L) \simeq H^0 \left(Y^c, L|_{Y^c}\left(-\left(Y \cap Y^c\right)\right)\right).\] Now let $L'$ be a line bundle that restricts to $L|_{Y^c}(-(Y \cap Y^c))$ on $Y^c$ and to $L|_{Y}((Y \cap Y^c))$ on $Y$. Its multidegree by construction equals $\md' = \md - \underline t_{V_n}$. The restriction map \[H^0(X, L') \to H^0(Y, L'|_{Y^c}) \]
    has kernel \[H^0\left(Y, L'|_{Y}\left(-\left(Y \cap Y^c \right)\right)\right) = H^0(Y, L|_{Y}) = 0.\] 
    Hence \[h^0(X,L') \leq h^0(Y, L'|_{Y}) = h^0(Y, L|_{Y}((-(Y \cap Y^c)) = h^0(X,L), \]
    as claimed.

    Let $M:=L^{-1}\otimes L'$, which is a line bundle of multidegree $-\underline t_{V_n}$. Tensor product with $M$ induces a bijection $\varphi:\Pic^{\ud}(X)\to \Pic^{\ud'}(X)$. Arguing as above we get that $h^0(X,\varphi(N))\leq h^0(X, N)$ for all $N\in \Pic^{\ud}(X)$, so $\rmax(X, \md') \leq  \rmax(X, \md).$

    Repeating this construction eventually gives an effective multidegree $\ud''$ by \cite[Algorithm 3.10]{C23}, for which $\rmax(X, \md'') \leq  \rmax(X, \md)$, and the claim follows.
\end{proof}

\begin{rmk}
A consequence of Theorem \ref{thm:effective realization} is that, given an effective divisor class $\delta$, we can compute the uniform algebraic rank by computing $\rmaxx(G, \md)$ for finitely many representatives $\md$ of $\delta$.
\end{rmk}

\section{Clifford representatives}

In this section, we give an application of the properties of the uniform algebraic rank established in Section~\ref{sec:uniform algebraic rank} by showing that every divisor class in the special range $0 \leq d \leq 2g - 2$ contains Clifford representatives. In the remainder of the section we then discuss the question of identifying such representatives explicitly. 

\subsection{Existence of Clifford representatives}

In \cite{CLM} the inequality $\ralg(G, \delta) \leq r_G(\delta)$ and the fact that $r_G$ satisfies the Clifford inequality were used to show that on any nodal curve $X$ with dual graph $G$ there exists a divisor $\md$ on $G$ such that every line bundle on $X$ of multidegree $\md$ satisfies the Clifford inequality. Here we are interested in the following stronger notion: 

\begin{defn}
    Let $\delta$ be a divisor class on $G$. We call $\md \in \delta$ a \emph{Clifford representative} if every line bundle $L$ of multidegree $\md$ on any curve $X$ with dual graph $G$ satisfies the Clifford inequality, \[r(X, L) \leq \frac{d}{2}.\] 
\end{defn}

If $G$ is a graph with only one vertex $v$, then $X$ is an irreducible nodal curve and the fact that all line bundles $L$ of degree $d$ on $X$ satisfy the Clifford inequality is shown in the appendix in \cite{EKS} (actually in loc. cit. the authors show that the Clifford theorem holds more generally for torsion-free rank $1$ sheaves on integral curves). The existence of a Clifford representative is also known for weightless and trivalent graphs $G$ by \cite[Proposition 4.6]{CLM}.

In \cite[Question 4.7]{CLM} the authors ask whether such Clifford representatives always exist. 
Our main result in this section answers this question affirmatively:

\begin{thm} \label{thm:existencereps}
    Let $\delta$ be a divisor class of degree $0 \leq d \leq 2g -2$ on a graph $G$ of genus $g$. Then $\delta$ contains a Clifford representative.
\end{thm}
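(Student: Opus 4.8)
The plan is to combine the inequality $\ralgg(G, \delta) \leq r_G(\delta)$ from Theorem~\ref{thm:mainineq} with the Clifford inequality for the Baker--Norine rank (Corollary~\ref{cor:cliffBN}) and the realization of the uniform algebraic rank by an \emph{effective} representative from Theorem~\ref{thm:effective realization}. The point is that a divisor $\md \in \delta$ realizing $\ralgg(G,\delta)$ is automatically a Clifford representative: by definition of $\ralgg$ as a minimum over $\md \in \delta$ of $\rmaxx(G, \md)$, if $\ralgg(G,\delta) = \rmaxx(G, \md')$ for some $\md' \in \delta$, then for every curve $X$ with dual graph $G$ and every line bundle $L$ of multidegree $\md'$ we have $r(X,L) \leq \rmax(X,\md') \leq \rmaxx(G,\md') = \ralgg(G,\delta) \leq r_G(\delta) \leq \frac{d}{2}$, where the last inequality is Corollary~\ref{cor:cliffBN} applied in the range $0 \leq d \leq 2g-2$.

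Concretely, the steps are: first, dispense with the degenerate cases. If $\delta$ is not effective, then $\ralgg(G,\delta) = -1$ by the remark following Theorem~D (which is itself a consequence of \eqref{eq:inequalities}), and then \emph{every} representative $\md \in \delta$ is trivially a Clifford representative since no line bundle of non-effective multidegree has a nonzero global section on the corresponding subcurve structure — more simply, $r(X,L) \leq r_G(\delta) = -1 \leq \frac{d}{2}$ for all such $L$. (One should double-check the edge behavior, but the inequality $r(X,L) \le r_G(\delta)$ from specialization-type reasoning, or directly $\rmax(X,\md) \le \rmaxx(G,\md)$ together with $\ralgg = -1$ forcing $\rmaxx(G,\md) = -1$ for the minimizing $\md$, handles it; in the non-effective case one can also just take a non-effective representative whose value is negative on some vertex, so all sections vanish there, and iterate.) Second, assume $\delta$ is effective. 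By Theorem~\ref{thm:effective realization} there is an effective $\md' \in \delta$ with $\ralgg(G,\delta) = \rmaxx(G,\md')$. Third, for this $\md'$, chain the inequalities $r(X,L) \leq \rmax(X,\md') \leq \rmaxx(G,\md') = \ralgg(G,\delta) \leq r_G(\delta) \leq \frac{d}{2}$ for every $X \in M_G$ and every $L \in \Pic^{\md'}(X)$. This exhibits $\md'$ as a Clifford representative and completes the proof.

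Strictly speaking, one does not even need the \emph{effective} realization from Theorem~\ref{thm:effective realization} for the bare existence statement: the definition of $\ralgg$ as a minimum over all of $\delta$ already produces \emph{some} representative $\md'$ (not a priori effective) attaining the value, and the chain of inequalities goes through verbatim for that $\md'$. So the core of the argument is really just Theorem~\ref{thm:mainineq} plus Corollary~\ref{cor:cliffBN}, with Theorem~\ref{thm:effective realization} serving only to guarantee the Clifford representative can be chosen effective if desired — which is natural to record in the proof since Clifford representatives are most interesting as explicit effective divisors. I would state the proof in the short form, remarking that $\md'$ may be taken effective by Theorem~\ref{thm:effective realization}.

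The main obstacle here is essentially nil — all the heavy lifting was done in establishing Theorem~\ref{thm:mainineq} (the comparison $\ralgg \leq r_G$) and Theorem~\ref{thm:effective realization}. The only subtlety is ensuring the logic "the minimizing representative is a Clifford representative" is airtight: one must use that $\rmaxx(G,\md') = \max_{X \in M_G} \rmax(X,\md')$ bounds $r(X,L)$ uniformly over \emph{all} curves $X$ with dual graph $G$ and all $L$ of that fixed multidegree, which is exactly what the definition of $\rmaxx$ gives. There is no interchange-of-quantifiers pitfall because we have genuinely fixed a single $\md'$ first (this is precisely the advantage of $\ralgg$ over $\ralg$, as highlighted in Remark~\ref{remdefunialgr}).
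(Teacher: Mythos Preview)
Your core argument is correct and identical to the paper's: take any $\md$ realizing the minimum in the definition of $\ralgg(G,\delta)$ and chain $r(X,L) \leq \rmax(X,\md) \leq \rmaxx(G,\md) = \ralgg(G,\delta) \leq d/2$, the last step being the Clifford inequality for $\ralgg$ (packaged in the paper as Corollary~\ref{cor:CliffALG}, itself just Theorem~\ref{thm:mainineq} plus Corollary~\ref{cor:cliffBN}). The paper does not invoke Theorem~\ref{thm:effective realization} or split into cases, and your own final paragraphs correctly identify this as unnecessary; one small slip to excise is the aside that \emph{every} $\md \in \delta$ is a Clifford representative when $\delta$ is not effective---this is false (a multidegree very negative on one vertex and very positive on another can support line bundles of large rank), though it plays no role in your actual argument.
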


\begin{proof}
    Let $\md \in \delta$ be a multidegree that realizes the minimum in the definition of $r^{\mathrm{ALG}}(G, \delta)$. We claim that $\md$ is a Clifford representative. 
    
    Indeed, by Corollary~\ref{cor:CliffALG}, we have \[r^{\mathrm{ALG}}(G, \delta) \leq \frac{d}{2}.\] 
    Since $\md$ realizes the minimum in the definition of $r^{\mathrm{ALG}}(G, \delta)$, we have $r^{\mathrm{ALG}}(G, \delta) = r^{\mathrm{MAX}}(G, \md)$ and thus \[r^{\mathrm{MAX}}(G, \md) \leq \frac{d}{2}. \]
    Since we take the maximum in the definition of $r^{\mathrm{MAX}}(G, \md)$ while varying $X$ in $M_G$, this in turn implies 
    \[r^{\mathrm{max}}(X, \md) \leq \frac{d}{2}\]
    \emph{for all} $X \in M_G$. 
    
    Finally, since we take the maximum in the definition of $r^{\mathrm{max}}(X, \md)$ while varying $L$ in $\Pic^{\md}(X)$, this yields 
    \[r(X,L) \leq \frac{d}{2} \]
    \emph{for all} $X \in M_G$ \emph{and} $L \in \Pic^{\md}(X)$. Thus $\md$ is a Clifford representative by definition. 
\end{proof}

By the proof of Theorem~\ref{thm:existencereps}, any divisor $\md \in \delta$ that realizes $r^{\mathrm{ALG}}(G, \delta)$ is a Clifford representative. We saw in Theorem~\ref{thm:effective realization} that we may assume that such a divisor is effective. Furthermore, in some special cases representatives that realize $r^{\mathrm{ALG}}(G, \delta)$ are known, for example by Corollary~\ref{coroteob} (4), semibalanced divisors $\md$ realize the uniform algebraic rank if $d = 0$ or $d = 2g -2$. In the general case, identifying such representatives is wide open.

\subsection{Explicit Clifford representatives}
\label{sec:explicit clifford}

In this section, we give an explicit description of Clifford representatives for a large class of graphs. 
The construction of such Clifford representatives will distinguish between two cases, depending on whether a divisor class is special or not. We begin by introducing the necessary definitions. Recall that we set $\md^* = \underline k_G - \md$ and that we denote by $\delta^*$ the class of $\md^*$. 

\begin{defn}
    Let $G$ be a graph.
    \begin{enumerate}
        \item A divisor $\md \in \Div(G)$ is \emph{uniform} if both $\md$ and $\md^*$ are effective.
        \item A divisor class $\delta \in \Pic(G)$ is \emph{special} if both $\delta$ and $\delta^*$ are effective.
    \end{enumerate}
\end{defn}

Explicitly, a divisor $\md$ is uniform if for any vertex $v \in V(G)$, \[0 \leq \md_v \leq 2 \omega(v) - 2 + \val(v).\] 
We have $0 > 2 \omega(v) - 2 + \val(v)$ if and only if $\omega(v) = 0$ and $\val(v) = 1$ (recall that we assume $G$ to be connected). Thus there exist uniform multidegrees on a graph $G$ if and only if $G$ does not contain any vertex $v$ with $\omega(v) = 0$ and $\val(v) = 1$, i.e., $G$ is semistable.

\begin{rmk}
    As an aside, we observe that if $\delta$ is not special, then $\ralg(G, \delta) = \ralgg(G, \delta) = r_G(\delta)$, since $\ralg(G, \delta) \leq \ralgg(G, \delta) \leq r_G(\delta)$ and all three notions of rank satisfy the Riemann-Roch theorem.
\end{rmk}

A divisor $\md$ with negative degree $d$ cannot be effective. Since the degree of the residual $\md^*$ is $2g - 2 - d$, it follows that the degree of a uniform divisor satisfies $0 \leq d \leq 2g - 2$. The same holds for the degree of a special divisor class.

Clearly, the class $\delta$ of a uniform divisor $\md$ is special. It is not difficult to see that the converse need not be true, cf. \cite[Example 5.3]{C23}.

If the class $\delta$ is not special, it is easy to describe Clifford representatives: 

\begin{lma} \label{lma:non-effective}
    Let $\delta$ be a divisor class of degree $0 \leq d \leq 2g-2$ on a graph $G$ of genus $g$.
    \begin{enumerate}
        \item If $\delta$ is not effective, then the $v$-reduced divisor $\md$ in $\delta$ is a Clifford representative for any $v \in V(G)$. 
        \item If the residual $\delta^*$ of $\delta$ is not effective, then the representative $\md \in \delta$ whose residual divisor $\md^*$ is $v$-reduced is a Clifford representative for any $v \in V(G)$. 
    \end{enumerate}
    
\end{lma}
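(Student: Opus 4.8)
The plan is to reduce both statements to a rank bound for line bundles on subcurves and then invoke Lemma~\ref{lma:injectivity} together with Clifford's theorem on the normalization. For part (1), let $\md \in \delta$ be the $v$-reduced divisor; by Definition~\ref{def:reduced}, $\md$ is effective away from $v$. If $\md(v) < 0$, then $\delta$ would be non-effective with $\md$ effective away from $v$, and one checks via the Dhar decomposition that this is consistent; in any case, the key point is that a $v$-reduced divisor in a non-effective class has a rather rigid structure. The cleanest route is: since $\delta$ is not effective, for \emph{every} representative $\me \in \delta$ there is some vertex with negative coefficient, and in particular the $v$-reduced representative $\md$ satisfies $\md(v) < 0$ (it cannot be effective). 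Then for any curve $X$ with dual graph $G$ and any line bundle $L$ of multidegree $\md$, I want to show $r(X,L) \le d/2$.

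The heart of the argument: apply Lemma~\ref{lma:injectivity} with $V = \{v\}$ and $Y = X_v$ the component corresponding to $v$; this gives that $H^0(X,L) \hookrightarrow H^0(X_v, L|_{X_v})$. Now $L|_{X_v}$ has degree $\md(v)$; if $\md(v) < 0$ then $h^0 = 0$ and $r(X,L) = -1 \le d/2$ trivially (note $d \ge 0$). But one must be careful: Lemma~\ref{lma:injectivity} requires $\md$ to be $V$-reduced, which for $V = \{v\}$ is exactly $v$-reduced, so this applies. Hence for a non-effective $\delta$, the $v$-reduced representative $\md$ has $\md(v)<0$, so $r(X,L) = -1$ for all $X$ and all $L \in \Pic^{\md}(X)$, and $\md$ is vacuously a Clifford representative. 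For part (2), apply part (1) to $\delta^*$: the $v$-reduced divisor $\md^*$ in $\delta^*$ is a Clifford representative for $\delta^*$, meaning $r(X, L^*) \le (2g-2-d)/2$ for all $X$ and $L^*$ of multidegree $\md^*$. Then use Riemann--Roch on $X$ (which holds fiberwise, as recorded in Fact~\ref{fact:eqrgralg}(1)(a) for $\rmax$, or directly $r(X,L) - r(X,L^*) = d - g + 1$ for each $L$): this gives $r(X,L) = r(X,L^*) + d - g + 1 \le (2g-2-d)/2 + d - g + 1 = d/2$, so the representative $\md := \underline k_G - \md^*$ is a Clifford representative for $\delta$.

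The main subtlety — and the step I'd expect to need the most care — is justifying that the $v$-reduced representative of a non-effective class actually has strictly negative value at $v$. This should follow because $v$-reduced divisors are unique in their class (Proposition~\ref{exiured}) and a $v$-reduced divisor that is effective at $v$ is effective everywhere (by Definition~\ref{def:reduced}(1)), hence would make $\delta$ effective; contrapositive gives $\md(v) < 0$. Once that is in hand, everything else is the injectivity lemma plus fiberwise Riemann--Roch. I would also remark that in part (1) the bound is achieved in the strong form $r(X,L) = -1$, so in fact any non-effective class has representatives on which no line bundle has sections at all, which is the expected behavior.
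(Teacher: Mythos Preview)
Your proposal is correct and follows essentially the same route as the paper: for (1), the $v$-reduced representative of a non-effective class has $\md(v)<0$, Lemma~\ref{lma:injectivity} with $V=\{v\}$ forces $h^0(X,L)=0$, and the Clifford inequality is trivially satisfied; for (2), the same argument applied to $\delta^*$ gives $h^0(X,K_X\otimes L^{-1})=0$, and Riemann--Roch on $X$ finishes. The only cosmetic difference is that in (2) you quote the Clifford bound $r(X,L^*)\le (2g-2-d)/2$ as an intermediate step, whereas the paper (and your own part (1)) actually give the sharper $r(X,L^*)=-1$; either way the arithmetic lands on $r(X,L)\le d/2$.
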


\begin{proof}
    Assume first that $\delta$ is not effective. Since the $v$-reduced representative $\md$ is by definition effective away from $v$, it follows that $\md$ is not effective on $v$. By Lemma~\ref{lma:injectivity} this means that any line bundle $L$ with multidegree $\md$ on a curve $X$ with dual graph $G$ does not admit a non-trivial global section, i.e., $h^0(X,L) = 0$. Such a line bundle satisfies the Clifford inequality since $d \geq 0$. 

    Assume next that the residual class $\delta^*$ is not effective. Let $\md \in \delta$ be such that its residual $\md^*$ is $v$-reduced. Then, arguing as above, any line bundle with multidegree $\md^*$ does not admit non-trivial global sections. Given a line bundle $L$ with multidegree $\md$ on a curve $X$ with dual graph $G$, the residual $K_X \otimes L^{-1}$ of $L$ has multidegree $\md^*$, where $K_X$ is the dualizing sheaf. Hence $h^0(X, K_X \otimes L^{-1}) = 0$. By the Riemann-Roch Theorem, we get
    \[h^0(X,L) = d - g + 1 + h^0(X, K_X \otimes L^{-1}) = d -g + 1 .\] 
    Finally, since $d \leq 2g - 2$ we have $\frac{d + 2}{2} \leq g$ and thus \[ h^0(X,L) = d -g + 1 \leq  d - \frac{d + 2}{2} + 1 < \frac{d}{2} + 1.\] 
\end{proof}

\medskip

The situation for special classes $\delta$ is more complicated. We will use \cite[Theorem 1.1]{Clifford} for this case, which gives a generalization of the Clifford inequality if the multidegree $\md$ of $L$ is uniform. In general, the bound of \cite[Theorem 1.1]{Clifford} is weaker than the classical Clifford inequality and equality in this weaker bound is achieved on any nodal curve $X$. Under certain assumptions on $G$, however, the two bounds coincide. To formulate the precise condition, we first introduce some notation.

Recall that a bridge of a graph $G$ is an edge whose removal disconnects the graph. Denote by $G^{\Br}$ the graph obtained from $G$ by contracting all edges that are not bridges, by construction a tree. We call $G$ a \emph{chain of $2$-edge connected components}, if $G^{\Br}$ is a chain; i.e., if it does not contain any vertex of valence greater than $2$. We note as a special case that if $G$ contains no bridges, it is a chain of $2$-edge connected components.

\tikzset{every picture/.style={line width=0.75pt}}
\begin{figure}[ht]	
\begin{tikzpicture}[x=0.7pt,y=0.7pt,yscale=-0.7,xscale=0.7]
\import{./}{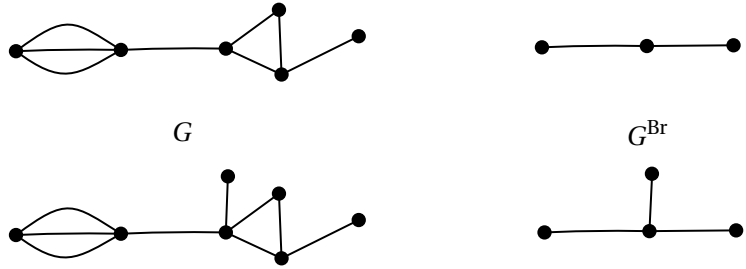}
\end{tikzpicture}		
\caption{On the left, two graphs $G$; on the right, their associated graph $G^{\Br}$ obtained by contracting all edges that are not bridges. The top one is a chain of $2$-edge connected components, the bottom one is not.}
\label{fig2}
\end{figure}

The chains of $2$-edge connected components are precisely the dual graphs for which the bound in \cite[Theorem 1.1]{Clifford} coincides with the classical Clifford inequality. Recall that we call a graph $G$ semistable if $G$ does not contain any vertex $v$ with $\omega(v) = 0$ and $\val(v) = 1$.

\begin{prop}\label{prop:uniform}
    A semistable graph $G$ is a chain of $2$-edge connected components if and only if every uniform divisor is a Clifford representative. 
\end{prop}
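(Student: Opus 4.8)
The statement is an "if and only if," so I would split it into the two implications, treating the harder direction (chain of 2-edge connected components $\Rightarrow$ every uniform divisor is a Clifford representative) first, since that is the one where the real content lives. For this direction, the plan is to exploit the structure of $G^{\Br}$: since $G$ is a chain of 2-edge connected components, contracting non-bridge edges produces a \emph{path} whose vertices correspond to the maximal $2$-edge connected subgraphs (blocks) of $G$, joined in sequence by bridges. I would decompose a curve $X$ with dual graph $G$ accordingly: write $X = X_1 \cup X_2 \cup \cdots \cup X_m$ where $X_i$ is the subcurve corresponding to the $i$-th block, and consecutive $X_i, X_{i+1}$ meet in a single node coming from a bridge. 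Given a uniform divisor $\md$, its restriction to each block $G_i$ is again a divisor whose degree lies between $0$ and $2g(G_i)-2$ — here I use that uniformity is a local condition at each vertex, so the defining inequality $0 \le \md_v \le 2g_v - 2 + \val(v)$ survives when we pass to a block after accounting for how $\val$ changes at the attaching vertices. The key point to invoke is \cite[Theorem 1.1]{Clifford}, which gives a Clifford-type bound for line bundles of uniform multidegree; the paper tells us this bound coincides with the classical one \emph{precisely} on chains of $2$-edge connected components, and in fact one expects the bound to be additive over the blocks in the bridge decomposition. So the strategy is: bound $h^0(X,L)$ by summing contributions block-by-block (using that a global section is determined by its restrictions to the $X_i$ together with a gluing condition at each of the $m-1$ bridge-nodes, which can only decrease the total), apply the sharp Clifford bound \cite[Theorem 1.1]{Clifford} on each block, and combine with the genus formula $g = \sum g(G_i)$ and degree formula $d = \sum d_i$ to get $r(X,L) \le \sum r_i \le \sum \tfrac{d_i}{2} = \tfrac{d}{2}$.

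For the converse — if $G$ is \emph{not} a chain of $2$-edge connected components, then some uniform divisor fails to be a Clifford representative — the plan is to produce an explicit counterexample. Since $G^{\Br}$ contains a vertex of valence $\ge 3$, there is a block $B$ of $G$ to which at least three bridges (or bridge-paths) attach, so $X$ has a subcurve $X_B$ meeting the rest of $X$ in at least three nodes lying on distinct branches. The idea is to choose a uniform multidegree concentrated so that the restriction to $X_B$ is something like the canonical bundle of $X_B$ (or a suitable large-rank bundle), push the complementary degree onto the attaching points of the neighboring components, and choose the gluing data over the bridge-nodes generically. The relevant phenomenon — already flagged in the paper's introduction via \cite[\S 4.3]{caporasosemistable} and \cite{Clifford} — is that gluing several base-point-free branches back to a single curve forces fewer independent conditions than a naive count suggests, so $h^0(X,L)$ exceeds the Clifford bound. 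Concretely I would mimic the "branch vertex" computations in Example~\ref{ex:strict ineq}: on each of the $\ge 3$ arms place enough degree to make the restriction base point free with many sections, then observe that the sections glue with a deficit large enough to violate $r(X,L) \le d/2$. One can always choose the multidegree to remain uniform by spreading degree so that at each vertex $v$ the value stays in $[0, 2g_v - 2 + \val(v)]$ — semistability of $G$ is exactly what guarantees this interval is nonempty at every vertex.

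The \textbf{main obstacle} I anticipate is making the block decomposition of $h^0$ fully rigorous: one must show that for a uniform multidegree the section spaces genuinely "add up" along the bridge-path, i.e., that cutting $X$ at a bridge-node relates $h^0(X,L)$ to $h^0(X',L')$ (on the normalization) in a controlled way, and that \cite[Theorem 1.1]{Clifford} can be applied separately on each block with the residual degree correctly bookkept at the attaching vertices. Bridges are somewhat delicate because a component attached by a single node contributes no extra gluing condition at all, so one has to be careful that restricting $\md$ to a block keeps it uniform for that block's canonical divisor $\underline{k}_{G_i}$ — which differs from $\underline{k}_G|_{G_i}$ exactly in the valence correction at the attaching vertices, and this correction is precisely what "uniform" was set up to absorb. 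I would handle this by an induction on the number of bridges: peel off one leaf block of $G^{\Br}$ at a time, using Lemma~\ref{lma:injectivity} or a direct short-exact-sequence argument to compare $h^0$ before and after removing that block, and feeding the uniform Clifford bound \cite[Theorem 1.1]{Clifford} into the inductive step. The converse direction's obstacle is subtler: ensuring the counterexample multidegree is uniform \emph{and} that the gluing deficit is large enough; I expect the cleanest route is to reduce to the minimal bad case — three arms meeting a single block — and reuse verbatim the style of computation in Example~\ref{ex:strict ineq}, then note that enlarging $G$ only helps (one can degenerate any larger bad graph onto this configuration, or simply repeat the local argument).
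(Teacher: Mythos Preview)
The paper's proof is a one-line citation: both implications are already contained in \cite[Theorem~1.1]{Clifford} and \cite[Theorem~1.3]{Clifford}, and the paper simply invokes them. Your plan, by contrast, attempts to reconstruct substantial parts of those results.

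For the forward direction, you appear to read \cite[Theorem~1.1]{Clifford} as giving the classical Clifford bound only on a single $2$-edge connected block, and then propose to assemble the global statement by a bridge-induction. But as the paragraph preceding the proposition makes explicit, \cite[Theorem~1.1]{Clifford} already applies to an \emph{arbitrary} semistable $G$ with uniform $\md$ and produces a bound depending on the structure of $G^{\Br}$; the content of the forward implication is precisely that this general bound collapses to $d/2$ when $G^{\Br}$ is a path. No block decomposition or induction is required on your end. Your inductive scheme also carries genuine risk: restricting a uniform $\md$ to a block $G_i$ lowers $\val(v)$ at the attaching vertices, so the restricted divisor need not be uniform on $G_i$ (you flag this but do not resolve it), and the additivity $r(X,L)\le\sum r_i$ you write down is not what a bridge-normalization gives without further work --- a bridge imposes at most one condition and possibly none, so a priori one only gets $h^0(X,L)\le\sum h^0(X_i,L|_{X_i})$, i.e.\ $r(X,L)\le\sum r_i + (m-1)$.

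For the converse, your plan to exhibit an explicit uniform $\md$ violating Clifford when $G^{\Br}$ has a branch vertex is exactly the content of \cite[Theorem~1.3]{Clifford}, which already supplies such examples (the cited theorem shows the general bound of \cite[Theorem~1.1]{Clifford} is achieved on every $X$, hence when that bound exceeds $d/2$ one has the desired failure). You would essentially be reproducing that construction.
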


\begin{proof}
    This is part of \cite[Theorem 1.1]{Clifford} and \cite[Theorem 1.3]{Clifford}.
\end{proof}

Proposition~\ref{prop:uniform} allows us to describe Clifford representatives in special divisor classes, provided that $G$ satisfies the assumptions of the proposition and the class contains a uniform representative. In general, not every special divisor class contains a uniform representative. Our final ingredient is a result that gives a sufficient criterion for the existence of uniform representatives in special divisor classes \cite[Theorem B]{C23}. We again state a weaker version adapted to our purposes:

\begin{prop} \label{prop:existence uniform}
    Let $G$ be a graph such that every vertex $v \in V(G)$ with $\omega(v) = 0$ is adjacent to a loop edge. Then every special class $\delta \in \Pic^d(G)$ with $0 \leq d \leq 2g - 2$ contains a uniform representative.  
\end{prop}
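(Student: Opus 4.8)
The plan is to pass to a combinatorially simpler graph and then construct the representative by running Dhar's algorithm and stopping it at the right moment. First I would use the hypothesis to reduce to the case where $G$ is loopless and $\omega(v) \geq 1$ for every $v \in V(G)$, so that $\underline{k}_G(v) = 2\omega(v) - 2 + \val(v) \geq \val(v)$ for all $v$: replacing a loop at $v$ by increasing $\omega(v)$ by $1$ leaves $\Div(G)$, every intersection number $v\cdot w$, hence $\Prin(G)$, $\Pic(G)$, the degree function and the divisor $\underline{k}_G$ all unchanged, so it affects neither the hypothesis that $\delta$ is special nor the conclusion. Performing this at every loop turns the hypothesis into ``$\omega(v) \geq 1$ for all $v$''.

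Next, fix any $q \in V(G)$ and let $\md$ be the $q$-reduced representative of $\delta$, which is effective since $\delta$ is effective. Applying condition (2) of Definition~\ref{def:reduced} to the singletons $A = \{v\}$ with $v \neq q$ gives $\md(v) < v\cdot(V(G)\setminus\{v\}) = \val(v) \leq \underline{k}_G(v)$, so $\md$ is $\leq \underline{k}_G$ on every vertex except possibly $q$. If $\md(q) \leq \underline{k}_G(q)$ then $\md$ is already uniform and we are done.

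Otherwise $\md^*(q) < 0$ while $\md^*$ is effective away from $q$, and $\delta^* = [\md^*]$ is effective because $\delta$ is special. I would then run Dhar's $q$-reduction algorithm on $\md^*$: writing $\me^*$ for the running divisor and $\me := \underline{k}_G - \me^*$, each step replaces $\me^*$ by $\me^* - \underline{t}_{V_n}$, where $V(G) = V_n \sqcup W$ is the Dhar decomposition with respect to $\{q\}$ and $\me^*$, so that $q \in V_n$ and $W = \W(\me^*,\{q\})$; this keeps $\me^*$ effective away from $q$ and does not decrease $\me^*(q)$, while on the side of $\me$ it decreases $\me(v)$ by $v\cdot W$ for each $v \in V_n$ and increases $\me$ on $W$. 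Since $\delta^*$ is effective, $\me^*(q)$ becomes $\geq 0$ after finitely many steps, and I would stop at the first such step; there $\me^* \geq 0$ everywhere, hence $\me \leq \underline{k}_G$, and $\me \sim \md$, so it only remains to check $\me \geq 0$. Up to and including the stopping step $\me^*(q) < 0$, i.e. $\me(q) > \underline{k}_G(q) \geq \val(q)$; on $W$ the value of $\me$ only goes up; for $v \in V_n \setminus \{q\}$, the fact that $v$ caught fire when the burnt set was some $B' \subseteq V_n$ means $v\cdot B' > \me^*(v) = \underline{k}_G(v) - \me(v) \geq \val(v) - \me(v)$, so $\me(v) > \val(v) - v\cdot B' = v\cdot(V(G)\setminus B') \geq v\cdot W$ and thus $\me(v) - v\cdot W \geq 1$; and at $q$ one has $\me(q) > \val(q) \geq q\cdot W$. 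Hence $\me$ stays effective at every step, and the divisor obtained at the stopping step is the desired uniform representative.

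The step I expect to be most delicate is precisely the stopping rule above: running the $q$-reduction all the way to the $q$-reduced representative of $\delta^*$ would not work, since once $\me^*(q) \geq 0$ the inequality $\me(q) > q\cdot W$ can fail and $\me$ can develop a negative entry at $q$; the argument must halt as soon as $\me^*(q) \geq 0$. Secondary points needing care are the standard fact that the $q$-reduced representative of an effective class is effective (so that $\me^*(q) \geq 0$ is indeed reached) and the bookkeeping in the first reduction, which is what upgrades the hypothesis to the inequality $\underline{k}_G(v) \geq \val(v)$ that drives the whole effectivity estimate.
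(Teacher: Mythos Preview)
The paper does not actually prove this proposition; it is quoted (in a slightly weakened form) from \cite[Theorem~A]{C23}, so there is no in-paper argument to compare against directly.

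Your approach is sound and self-contained. The reduction to a loopless graph with $\omega(v)\ge 1$ everywhere is harmless and gives the crucial inequality $\underline{k}_G(v)\ge\val(v)$. Starting from the $q$-reduced representative $\md$ of $\delta$ and, if $\md(q)>\underline{k}_G(q)$, running Dhar on $\md^*$ with the early stopping rule ``halt the first time $\me^*(q)\ge 0$'' does exactly what is needed: effectivity of $\me^*$ away from $q$ is preserved by Dhar, while effectivity of $\me$ is preserved step by step because the burning inequality $v\cdot V_i>\me^*(v)$ together with $\underline{k}_G(v)\ge\val(v)$ forces $\me(v)>v\cdot W$ for every burnt $v\neq q$, and $\me(q)>\underline{k}_G(q)\ge\val(q)\ge q\cdot W$ as long as $\me^*(q)<0$. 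Your identification of the stopping rule as the delicate point is exactly right: continuing past it would break the estimate at $q$. Termination and the nonemptiness of $W$ while $\me^*(q)<0$ both follow from the effectivity of $\delta^*$, as you note.

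One notational slip: when you write $\val(v)-v\cdot B' = v\cdot(V(G)\setminus B')$, the right-hand side in the paper's convention includes the self-intersection $v\cdot v$, since $v\notin B'$. What you mean, and what you actually use, is $v\cdot\bigl((V(G)\setminus B')\setminus\{v\}\bigr)$, which in the loopless setting does equal $\val(v)-v\cdot B'$ and does dominate $v\cdot W$. This does not affect the validity of the argument.
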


Combining the previous results, we obtain:

\begin{thm}\label{thm:main_detailed}
    Let $G$ be a chain of $2$-edge connected components and assume for all vertices $v \in V(G)$ if $\omega(v) = 0$ then $v$ is adjacent to a loop. Let $\delta \in \Pic^d(G)$ be a divisor class of degree $0 \leq d \leq 2g - 2$ on $G$. Then $\delta$ contains a Clifford representative given by:
\begin{enumerate}
    \item a uniform divisor, if $\delta$ is special; 
    \item a $v$-reduced divisor for some vertex $v \in V(G)$, if $\delta$ is not special and not effective;
    \item a divisor $\md$ whose residual $\md^*$ is $v$-reduced for some vertex $v \in V(G)$ if $\delta$ is effective and not special.  
\end{enumerate}
\end{thm}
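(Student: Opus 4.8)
The plan is to dispose of the three cases separately, reducing each to a result already established in the excerpt. Before splitting into cases I would record two preliminary observations. First, the hypothesis that every weight-zero vertex of $G$ is adjacent to a loop forces each such vertex to have valence at least two, so $G$ is semistable in the sense used here; this is what lets us invoke Propositions~\ref{prop:uniform} and~\ref{prop:existence uniform}. Second, the three cases are exhaustive: if $\delta$ is not special, then by definition at least one of $\delta$, $\delta^*$ is not effective, which is case (2) when $\delta$ itself is not effective and case (3) when $\delta$ is effective (forcing $\delta^*$ not effective); the remaining possibility is that $\delta$ is special, which is case (1).

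For case (1), when $\delta$ is special, I would apply Proposition~\ref{prop:existence uniform}: since every weight-zero vertex of $G$ is adjacent to a loop and $0 \le d \le 2g - 2$, the class $\delta$ contains a uniform representative $\md$. Because $G$ is a semistable chain of $2$-edge connected components, Proposition~\ref{prop:uniform} then yields that every uniform divisor, and $\md$ in particular, is a Clifford representative. For case (2), with $\delta$ not effective, part (1) of Lemma~\ref{lma:non-effective} shows directly that the $v$-reduced divisor in $\delta$ is a Clifford representative for any $v \in V(G)$. For case (3), with $\delta$ effective but $\delta^*$ not effective (as $\delta$ is not special), part (2) of Lemma~\ref{lma:non-effective} shows that the representative $\md \in \delta$ whose residual $\md^*$ is $v$-reduced is a Clifford representative for any $v$. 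In both of the latter cases I would double-check that the degree hypothesis $0 \le d \le 2g - 2$ of Lemma~\ref{lma:non-effective} matches the one assumed in the theorem.

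Since the argument is just a bookkeeping combination of the three cited statements, I do not anticipate a genuine obstacle. The only points requiring a little care are checking that $G$ actually satisfies the semistability hypothesis of Propositions~\ref{prop:uniform} and~\ref{prop:existence uniform}, making sure the case distinction covers the degenerate situation in which both $\delta$ and $\delta^*$ fail to be effective (subsumed under case (2)), and observing that ``not special and not effective'' in (2) and ``effective and not special'' in (3) reduce, respectively, to ``$\delta$ not effective'' and ``$\delta$ effective with $\delta^*$ not effective.''
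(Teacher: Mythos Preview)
Your proposal is correct and follows essentially the same route as the paper's proof: verify semistability from the loop hypothesis, invoke Proposition~\ref{prop:existence uniform} and then Proposition~\ref{prop:uniform} for the special case, and defer the two non-special cases to the two parts of Lemma~\ref{lma:non-effective}. Your write-up is simply more explicit about the exhaustiveness of the case split and the semistability check than the paper's own argument.
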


\begin{proof}
    Suppose first that $\delta$ is special. Since we assume that for all vertices $v \in V(G)$, if $\omega(v) = 0$ then $v$ is adjacent to a loop, $G$ is in particular semistable. Furthermore, by Proposition~\ref{prop:existence uniform}, $\delta$ needs to contain a uniform representative $\md$. 
    By assumption, $G$ is in addition a chain of $2$-edge connected components, and hence any uniform divisor $\md$ is a Clifford representative by Proposition~\ref{prop:uniform}.

    The cases in which $\delta$ is not special are covered by Lemma~\ref{lma:non-effective}.
\end{proof}
  
\providecommand{\bysame}{\leavevmode\hbox to3em{\hrulefill}\thinspace}
\providecommand{\MR}{\relax\ifhmode\unskip\space\fi MR }
\providecommand{\MRhref}[2]{%
  \href{http://www.ams.org/mathscinet-getitem?mr=#1}{#2}
}
\providecommand{\href}[2]{#2}

\end{document}